\documentclass[a4paper,10pt]{article}
\usepackage{mathtext}
\usepackage[T1,T2A]{fontenc}
\usepackage[cp1251]{inputenc}
\usepackage[english]{babel}
\usepackage{amsmath}
\usepackage{amsfonts}
\usepackage{amssymb}
\usepackage{mathrsfs}
\usepackage{amsthm}
\usepackage{enumerate}
\usepackage{graphicx}

\usepackage{color}
\usepackage{euscript}

\usepackage{cite}

\textwidth=16cm
\oddsidemargin=0pt
\topmargin=0pt

\newtheorem{Le}{Lemma}[section]
\newtheorem{Def}[Le]{Definition}

\newtheorem{Th}{Theorem}[section]
\newtheorem{Cor}[Le]{Corollary}
\newtheorem{Rem}[Le]{Remark}
\newtheorem{Conj}[Le]{Conjecture}
\newtheorem{Que}[Le]{Question}
\newtheorem{Ex}[Le]{Example}
\numberwithin{equation}{section}

\newcommand{\R}{\mathbb{R}}

\newcommand{\N}{\mathbb{N}}

\newcommand{\BB}{\mathbb{T}}
\newcommand{\F}{\mathcal{F}}
\newcommand{\AF}{\mathcal{AF}}

\DeclareMathOperator{\J}{J}

\newcommand{\eps}{\varepsilon}

\newcommand{\Lin}{\mathrm{T}}

\newcommand{\eq}[1]{\begin{equation}{#1}\end{equation}}
\newcommand{\mlt}[1]{\begin{multline}{#1}\end{multline}}
\newcommand{\alg}[1]{\begin{align}{#1}\end{align}}

\newcommand{\Set}[2]{\Big\{{#1}\,\Big|\;{#2}\Big\}}

\newcommand{\GeqrefTwo}[2]{\stackrel{\scriptscriptstyle{\eqref{#1},\eqref{#2}}}{\geq}}
\newcommand{\Gref}[1]{\stackrel{#1}{\geq}}
\newcommand{\Geqref}[1]{\stackrel{\scriptscriptstyle{\eqref{#1}}}{\geq}}

\DeclareMathOperator{\I}{I}

\newcommand{\E}{\mathbb{E}}

\newcommand{\Min}{\mathcal{M}_p}

\newcommand{\Bell}{\mathbb{B}}
\title{On~$\Phi$-inequalities for martingale fractional integration and their Bellman functions}
\author{Dmitriy Stolyarov\thanks{Supported by Russian Science Foundation grant N 19-71-10023}}
\begin{document}
\maketitle
\begin{abstract}
Inspired by a conjecture of Vladimir Maz'ya on~$\Phi$-inequalities in the spirit of Bourgain and Brezis, we establish some~$\Phi$-inequalities for fractional martingale transforms. These inequalities may be thought of as martingale models of~$\Phi$-inequalities for differential operators. The proofs rest on new simple Bellman functions.
\end{abstract}

\section{Introduction}
In~\cite{Mazya2010}, Vladimir Maz'ya conjectured that the inequality
\eq{\label{MazjaInequality}
\Big|\int\limits_{\R^d}\Phi(\nabla f(x))\,dx\Big| \lesssim \|\Delta f\|_{L_1(\R^d)}^{\frac{d}{d-1}},\qquad f\in C_0^\infty(\R^d),
}
holds true whenever~$\Phi \colon \R^d \to \R$ is a locally Lipschitz positively~$\frac{d}{d-1}$-homogeneous function satisfying
\eq{\label{MazjaCancellation}
\int\limits_{S^{d-1}} \Phi(\zeta)\,d\sigma(\zeta) = 0.
}
In the latter condition,~$\sigma$ stands for the~$(d-1)$-dimensional Hausdorff measure on the unit sphere. By the positive~$p$-homogeneity of a function~$\Psi\colon \R^d \to \R$ we mean the validity of the identity~$\Psi(t x) = t^p \Psi(x)$ for any~$x\in \R^d$ and any~$t > 0$. The notation~$A \lesssim B$ means that there exists a constant~$C$ such that~$A \leq CB$ for any choice of a parameter. For example, the constant in the inequality~\eqref{MazjaInequality} should not depend on the particular choice of~$f$. The necessity of condition~\eqref{MazjaCancellation} for~\eqref{MazjaInequality} may be verified by the example of~$f$ such that~$\Delta f$ mimics a delta measure. The conjecture was also listed as Problem~$5.1$ in~\cite{Mazya2018}. Some particular cases had been considered in~\cite{MazyaShaposhnikova2009}.

This inequality expresses a phenomenon that has recently attracted a lot of attention. In a broad sense, it says that though the Hardy--Littlewood--Sobolev inequality is not valid at the endpoint case~$p=1$, if one excludes delta measures from consideration in an appropriate manner, then the inequality becomes valid. Sometimes the resulting inequalities are called Bourgain--Brezis inequalities. We refer the reader to the papers~\cite{BourgainBrezis2002, BourgainBrezis2004, BourgainBrezis2007, BousquetVanSchaftingen2014, HernandezSpector2020, KMS2015, LanzaniStein2005, Mazya2007, Raita2018, Raita2019, SpectorVanSchaftingen2019, Spector2020, Stolyarov2020, Stolyarov2021, VanSchaftingen2004one, VanSchaftingen2004, VanSchaftingen2008, VanSchaftingen2010, VanSchaftingen2013} among many others and to the surveys~\cite{VanSchaftingen2014, Spector2019}.

The papers~\cite{ASW2018} and~\cite{Stolyarov2019} suggest a martingale interpretation of the phenomenon. While being interesting in itself, the martingale model develops intuition in this circle of questions. See~\cite{Stolyarov2020} and~\cite{Stolyarov2021} for the application of martingale techniques to the original Bourgain--Brezis inequalities for differential operators. The present paper provides a martingale version of the problem stated at the very beginning. We will formulate it in Section~\ref{s2} below and then provide solution in Sections~\ref{s3},~\ref{s4}, and~\ref{S5}; the last Section~\ref{s6} contains some comments on the original Maz'ya's problem. The solution rests upon a version of induction on scales called the Bellman function (or Burkholder) method. Though this is quite well-known and developed technique for proving inequalities in probability and harmonic analysis, it seems that it had never been applied to Bourgain--Brezis inequalities. The Bellman function arising in our reasonings seems to be interesting in itself (though we will not find the sharp Bellman function, only provide a supersolution in Theorem~\ref{Supersolution}). Very little is known about Bellman functions related to fractional operators, however, see~\cite{BanuelosOsekowski2017} and~\cite{Osekowski2014} for two sharp supersolutions.

The paper is self-contained, however, we refer the reader to the foundational papers~\cite{Burkholder1984},~\cite{NazarovTreil1996} and the books~\cite{Osekowski2012}, \cite{VasyuninVolberg2020} for the basics of the Bellman function method. 

I wish to thank Vladimir Maz'ya for attracting my attention to his problem and to Ilya Zlotnikov for reading the paper and improving the presentation.  

\section{Definition and setting}\label{s2}
 We will be using the notation from~\cite{ASW2018} and~\cite{Stolyarov2019} with some modifications; the idea of using the model described below goes back to~\cite{ChaoJanson1981} and~\cite{Janson1977}. Let~$m \geq 2$ and~$\ell$ be natural numbers. Consider the linear space
\eq{
V = \Set{x\in \R^m}{\sum_{j=1}^m x_j=0}
}
and a linear mapping~$\Lin\colon V \to V\otimes \R^\ell$. 

Let~$\F = \{\F_n\}_n$ be an~$m$-uniform filtration on a non-atomic probability space. By this we mean that each~$\F_n$ is a set algebra consisting of measurable sets, and any atom (minimal by inclusion non-empty element) of~$\F_n$ is split into~$m$ atoms in~$\F_{n+1}$ having equal probability. We also require~$\F_0$ to be the trivial algebra. The symbol~$\AF_n$ denotes the set of all atoms in~$\F_n$. 
For each~$\omega \in \AF_n$, we fix a map
\begin{equation*}
\J_\omega \colon [1\,..\,m] \to \{\omega' \in \AF_{n+1}\mid \omega' \subset \omega\}.
\end{equation*}
This fixes the tree structure on the set of all atoms. 

We will be considering~$\R$ or~$\mathbb{R}^\ell$-valued martingales adapted to~$\F$. Let~$F = \{F_n\}_n$ be an~$\mathbb{R}$-valued (or~$\R^\ell$-valued) martingale, i.e. a sequence of random variables that satisfy two requirements: first, each~$F_n$ is~$\F_n$-measurable (constant on the elements of~$\AF_n$) and second,~$\E(F_{n+1}\mid \F_n) = F_{n+1}$ for every~$n$. Define the martingale difference sequence by the rule
\begin{equation*}
f_{n+1} = F_{n+1} - F_n,\quad n \geq 0.
\end{equation*}
Now fix an atom~$\omega \in \AF_n$. The map~$\J_\omega$ may be naturally extended to the linear map that identifies an element of~$V$ with the restriction~$f_{n+1}|_{\omega}$ of a martingale difference to~$\omega$ (similarly, martingale differences of~$\R^\ell$-valued martingale correspond to elements of~$V\otimes \mathbb{R}^\ell$). In other words, the natural extension of~$\J_\omega$ identifies~$V$ with the space of real valued~$\F_{n+1}$-measurable functions on~$\omega$ having mean value zero. The said extension will be also denoted by~$\J_\omega$. 

We introduce the martingale analog of the Riesz potential:
\begin{equation*}
\I_\alpha[F] = \Big\{\sum\limits_{k=0}^{n}m^{-\alpha k}f_{k}\Big\}_{n}, \quad \alpha > 0.
\end{equation*}
Consider a vectorial modification of the Riesz potential:
\begin{equation}\label{ModifiedRiesz}
\BB_\alpha[F] = \bigg\{\sum\limits_{k=0}^{n-1} m^{-\alpha (k+1)}\sum\limits_{\omega \in \AF_k}\J_\omega\Big[\Lin\big[\J^{-1}_\omega [f_{k+1}|_{\omega}]\big]\Big]\bigg\}_n
\end{equation} 
There is a small inaccuracy in the notation here. Namely, the image of~$\J_\omega$ is formally defined as a function on~$\omega$. We have extended it by zero to the remaining part of the probability space. We will often denote by~$\BB_\alpha[F]$ not only the martingale~\eqref{ModifiedRiesz}, but also the series
\eq{
\sum\limits_{k=0}^\infty m^{-\alpha (k+1)}\sum\limits_{\omega \in \AF_k}\J_\omega\Big[\Lin\big[\J^{-1}_\omega [f_{k+1}|_{\omega}]\big]\Big].
}
We use the operator~$\J_\omega$ to make the formula mathematically correct. Informally, we may write it as
\eq{
\BB_\alpha[F] = \sum\limits_{k=0}^\infty m^{-\alpha (k+1)}\sum\limits_{\omega \in \AF_k}\Lin[f_{k+1}|_{\omega}].
} 

\begin{Ex}\label{Example1}
Let~$m=3$ and~$\ell=1$. Consider the operator~$\Lin\colon V \to V$ given by
\eq{\label{ExampleOfT}
\Lin[(x,y,z)] = (z-y, x-z,y-x),\qquad (x,y,z)\in V.
} 
For an atom~$\omega \in \AF_n$\textup, let~$\omega_1$\textup,~$\omega_2$\textup, and~$\omega_3$ be its kids in~$\AF_{n+1}$. Then\textup,
\mlt{\label{ExampleTalpha}
\BB_\alpha [F] = \sum\limits_{k=0}^\infty 3^{-\alpha (k+1)}\!\!\!\!\!\sum\limits_{\omega \in \AF_n}\!\!\!\!\! \Big(\big(f_{n+1}(\omega_3) - f_{n+1}(\omega_2)\big)\chi_{\omega_1} +\\ \big(f_{n+1}(\omega_1) - f_{n+1}(\omega_3)\big)\chi_{\omega_2} + \big(f_{n+1}(\omega_2) - f_{n+1}(\omega_1)\big)\chi_{\omega_3}\Big).
}
\end{Ex}

Let also~$p \in (1,\infty)$. Consider a positively~$p$-homogeneous function~$\Phi\colon \R^\ell \to \R$.
Assume~$\Phi$ is locally Lipschitz. We study the conditions on~$\Phi$ and~$\Lin$ that are necessary and sufficient for the uniform (with respect to~$F$) inequality
\eq{\label{MainIneq}
\Big|\E \Phi(\BB_\alpha[F])\Big| \lesssim \|F\|_{L_1}^p,\qquad \alpha = \frac{p-1}{p}.
}
Recall that
\eq{
\|F\|_{L_1} = \sup_n \E|F_n|.
}
The inequality~\eqref{MainIneq} requires explanations. We will study it for simple martingales only (because otherwise the correctness of the mathematical expectation on the left hand side is questionable). By a simple martingale we mean a martingale for which there exists~$N \in \N$ such that~$F_{N} = F_{N+1} = F_{N+2} = \ldots$. It is convenient to denote~$F_N$ by~$F_{\infty}$ in this case. Of course, the constant in~\eqref{MainIneq} must be independent of~$N$.

The condition~$\alpha = \frac{p-1}{p}$ in~\eqref{MainIneq} comes from homogeneity considerations. Namely, these considerations show that~$\alpha \geq \frac{p-1}{p}$ (otherwise the inequality cannot hold true). In the case~$\alpha > \frac{p}{p-1}$, a stronger inequality
\eq{
\E |\I_\alpha [F]|^p \lesssim \|F\|_{L_1}^p,\qquad \alpha > \frac{p}{p-1},
}
holds. For example, it follows from the Hardy--Littlewood--Sobolev inequality for martingales. See~\cite{Watari1964} for the dyadic version of this inequality, Theorem 5.1 in~\cite{NakaiSadasue2012} for a version for uniform filtrations, and~\cite{StolyarovYarcev2020} for a generalization to the setting of arbitrary filtrations.

The two cancellation conditions seem to be related to the problem. We start with the cancellation condition on~$\Lin$. Let~$D_1,D_2,\ldots, D_m$ be the vectors in~$V$ that correspond to martingales representing delta measures:
\eq{\label{DeltaVectors}
D_j = (\underbrace{-1,-1,\ldots, -1}_{j-1}, m-1,-1,\ldots, -1).
}
If~$v$ is a vector in~$V$ or in~$V\otimes \R^\ell$, the notation~$v_j$,~$v_j \in \R$ or~$v_j \in \R^\ell$ respectively, signifies its~$j$th coordinate (while it is quite natural for~$v$, it might need an explanation for the vector~$\Lin[D_j]$ in the formula~\eqref{WC} below). The following condition appeared in~\cite{Stolyarov2019}. 
\begin{Def}
We say that~$\Lin$ satisfies the weak cancelation condition if
\eq{\label{WC}
(\Lin[D_j])_j = 0\qquad \text{for any } j\in [1\,..\,m].
}
\end{Def}
Note that this condition cannot be fulfilled in the most pleasant case~$m=2$ (except for~$\Lin = 0$).
\begin{Def}
We say that~$\Phi$ is~$\Lin$-canceling provided 
\eq{\label{WCT}
\sum\limits_{i\ne j} \Phi\big((\Lin[D_j])_i\big) = 0\qquad \text{for any } j \in [1\,..\,m].
}
\end{Def}
We are ready to state our main result.
\begin{Th}\label{MainTh}
Let the function~$\Phi$ be locally Lipschitz and positively $p$-homogeneous. If~$T$ satisfies the weak cancelation condition and~$\Phi$ is~$\Lin$-canceling\textup, then the inequality~\eqref{MainIneq} holds true for all simple martingales with a uniform constant. 
\end{Th}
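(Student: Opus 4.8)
Since $-\Phi$ is again locally Lipschitz, positively $p$-homogeneous, and $\Lin$-canceling whenever $\Phi$ is, it suffices to prove the one-sided bound $\E\Phi(\BB_\alpha[F]_\infty)\lesssim\|F\|_{L_1}^p$. I would begin by telescoping over scales to expose the mechanism: since $\BB_\alpha[F]_0=0$ and $\Phi(0)=0$ (positive $p$-homogeneity with $p>1$),
\eq{\E\Phi(\BB_\alpha[F]_\infty)=\sum_{n\geq0}\E\Big[\E\big(\Phi(\BB_\alpha[F]_{n+1})\mid\F_n\big)-\Phi(\BB_\alpha[F]_n)\Big],}
and on an atom $\omega\in\AF_n$ the $n$-th summand equals $\frac1m\sum_{j=1}^m\Phi\big(y+m^{-\alpha(n+1)}b_j\big)-\Phi(y)$ with $y=\BB_\alpha[F]_n|_\omega$ and $b=\Lin[a]\in V\otimes\R^\ell$ for $a=\J_\omega^{-1}[f_{n+1}|_\omega]\in V$. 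Writing $a=\frac1m\sum_k a_kD_k$ (legitimate because $\sum_kD_k=0$ and $(D_k)_i=m\delta_{ik}-1$, so the vectors of \eqref{DeltaVectors} span $V$), linearity gives $b=\frac1m\sum_k a_k\Lin[D_k]$; by the weak cancellation condition \eqref{WC} the coordinate $b_i$ then never involves $a_i$, and in the extreme ``delta'' regime $a=\lambda D_{j_0}$ ($\lambda>0$), $y=0$, the summand collapses to a positive multiple of $\sum_{i\neq j_0}\Phi\big((\Lin[D_{j_0}])_i\big)$, which vanishes precisely by the $\Lin$-canceling condition \eqref{WCT}. Thus the two hypotheses of Theorem~\ref{MainTh} annihilate exactly the configuration --- a large martingale jump at the root, mimicking a delta measure --- that defeats the naive endpoint estimate; the role of the Bellman function is to propagate this cancellation through all scales.

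The plan is then to carry out induction on scales (the Bellman/Burkholder method). I would look for a function $\Bell=\Bell(x,N,y,\eps)$ of the state variables $x=F_n\in\R$, $N=\E(|F_\infty|\mid\F_n)$ (a non-negative martingale with $N\geq|F_n|$ and $N_0=\|F\|_{L_1}$), $y=\BB_\alpha[F]_n\in\R^\ell$, and the running scale $\eps=m^{-\alpha n}\in(0,1]$, required to satisfy: (i) a \emph{majorization} bound $\Bell(x,|x|,y,\eps)\geq\Phi(y)$; (ii) an \emph{initial} bound $\Bell(x,N,0,1)\lesssim N^p$ whenever $|x|\leq N$; and (iii) the \emph{main inequality}
\eq{\Bell(x,N,y,\eps)\;\geq\;\frac1m\sum_{j=1}^m\Bell\big(x+a_j,\,N_j,\,y+\eps m^{-\alpha}(\Lin[a])_j,\,\eps m^{-\alpha}\big)}
for all $a\in V$ and all $N_j\geq|x+a_j|$ with $\frac1m\sum_j N_j=N$, i.e.\ for every admissible elementary step of the pair $(F,N)$. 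The sharp such $\Bell$ is the value function $\sup\set{\E\Phi(y+\eps\BB_\alpha[G])}{G_0=x,\ \|G\|_{L_1}=N}$ over simple martingales $G$; the self-similarity of $\BB_\alpha$ below an atom --- where $\BB_\alpha[F]=y_\omega+\eps_\omega\BB_\alpha[F^\omega]$ for the re-indexed subtree martingale $F^\omega$, which has $\|F^\omega\|_{L_1}=N_\omega$ --- makes (i)--(iii) hold for the value function, and (ii) is exactly the content of the theorem. Given any supersolution $\Bell$ obeying (i)--(iii), one applies (iii) in expectation, starting at the level beyond which the simple martingale is constant (there $\E(|F_\infty|\mid\F_\bullet)=|F_\bullet|$, so (i) applies) and moving up the tree to the root, whereupon (ii) at $(F_0,\|F\|_{L_1},0,1)$ with $|F_0|\leq\|F\|_{L_1}$ closes \eqref{MainIneq}. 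The value $\alpha=\tfrac{p-1}{p}$ is critical here: it is exactly the contraction rate of $\eps$ (namely $\eps\mapsto\eps m^{-\alpha}$) at which a $\Bell$ positively $p$-homogeneous in $(x,N,y)$ can balance the $p$-homogeneous datum $\Phi(y)$ against the $N^p$-type growth; for larger $\alpha$ the problem becomes subcritical, and for $\alpha>\tfrac{p}{p-1}$ the Hardy--Littlewood--Sobolev inequality already suffices.

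The main obstacle is, as expected, the construction of $\Bell$ together with the verification of the main inequality (iii): the entire difficulty of the Bourgain--Brezis phenomenon is concentrated here, since the pointwise bound $|\Phi(u)-\Phi(v)|\lesssim(|u|+|v|)^{p-1}|u-v|$, summed over the tree against only $\|F\|_{L_1}$-control, is hopelessly lossy. I would search for $\Bell$ of the shape ``$\Phi(y)$ plus a concave power term in $N$ carrying the favourable sign (so that Jensen for $\tau\mapsto\tau^p$ works for us when $N$ splits) plus a penalty term squeezed between the $\Phi$-data and $N^p$'', reduce this ansatz to an algebraic/differential problem, and verify (iii) by substituting the decomposition $a=\frac1m\sum_k a_kD_k$ and estimating the deficit between the two sides uniformly over the split --- using \eqref{WC} to make the $b_i$-increment insensitive to $a_i$ and \eqref{WCT} to absorb the borderline large-$a$ contributions. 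Because only a supersolution is needed --- not the sharp Bellman function --- there is slack in this step; the paper isolates it as Theorem~\ref{Supersolution}, after which the telescoping, the induction on scales, and the passage from the supersolution to \eqref{MainIneq} are routine.
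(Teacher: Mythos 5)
Your framework is the paper's: you reduce to a one-sided bound via $-\Phi$, set up a Bellman function on the state $(F_n,\E(|F_\infty|\mid\F_n),\BB_\alpha[F]_n)$, observe that a supersolution satisfying a boundary majorization and a one-step main inequality propagates down the tree as a supermartingale, and correctly locate where the two cancellation conditions must act (the delta configurations $\vec x=\lambda D_{j}$). Your version with an explicit scale variable $\eps=m^{-\alpha n}$ is equivalent to the paper's, which eliminates $\eps$ by exploiting positive $p$-homogeneity and instead tracks $m^{\alpha n}(y_0+(\BB_\alpha[F])_n)$ with the prefactor $m^{-(p-1)n}$. All of this matches Section~3 of the paper.

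The genuine gap is that you stop exactly where the proof begins: you never produce a supersolution nor verify its main inequality, and you explicitly defer this to ``Theorem~\ref{Supersolution}'', which is the theorem being proved, not a tool available to you. Your ansatz (``$\Phi(y)$ plus a concave power term in $N$ plus a penalty term'') points in the right direction --- the paper takes $G(x,y,z)=\Phi(y)+C_1\min(|y|^{p-1}z,|y|z^{p-1})+C_2z^p$ for $p\le2$ and $G=\Phi(y)+C_1(|y|^{p-1}z+|y|z^{p-1})+C_2z^p$ for $p\ge2$ --- but the whole content lies in the verification, and the mechanism that makes it work is not identified in your sketch. Concretely: the Jensen gain of the $z^p$ term in one step is exactly $m^{-p}\Psi(\vec z)$ with $\Psi(\vec z)=(\sum_jz_j)^p-\sum_jz_j^p$, and the key quantitative fact (Lemma~\ref{PsiLemma}) is that on the sphere $\sum_jz_j^p=1$ one has $\Psi(\vec z)\gtrsim|\vec z-e_j|$ near each vertex $e_j$. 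This \emph{first-order} lower bound is what allows the merely Lipschitz error terms $\sum_j\Min((\Lin[\vec x])_j,z_j)$ and $|\sum_j\Phi((\Lin[\vec x])_j)|$ to be absorbed: each is a Lipschitz, $p$-homogeneous function of the data that vanishes at the delta configurations (the first by weak cancellation of $\Lin$, the second by $\Lin$-cancellation of $\Phi$, using that $|x_j|\le z_j$ forces $\vec x=\lambda D_j$ when $\vec z=e_j$), hence is $O(\Psi(\vec z))$. On top of this one needs the strict one-step contraction of the middle term (via concavity of $\theta(t)=\min(|t|,|t|^{p-1})$ and $\theta(\lambda t)\le\lambda^{p-1}\theta(t)$ for $p\le2$; via Lemma~\ref{EpsilonLemma} and $(\sum z_j)^{p-1}\ge\sum z_j^{p-1}$ for $p\ge2$), which is what pays for the $\Phi$-increment estimated in Lemma~\ref{PhiLemma}. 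Without carrying out this construction and these estimates, the argument establishes only the (routine) reduction of Theorem~\ref{MainTh} to the existence of a supersolution, not the theorem itself.
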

It is not clear whether the weak cancellation condition is necessary (though it appears quite naturally in the construction of the Bellman supersolution below). We will show that if it holds true, then~\eqref{WCT} is necessary for~\eqref{MainIneq} (see Corollary~\ref{NecessityOfPhiCancellation} below).
\begin{Ex}[Continuation of Example~\ref{Example1}]\label{Example2}
Note that the operator~$\Lin$ given in~\eqref{ExampleOfT} satisfies condition~\eqref{WC}. Let for simplicity~$p=2$ and~$\alpha = \frac12$. In this case\textup, the choice for a positively $2$-homogeneous function~$\Phi\colon \R \to \R$ satisfying~\eqref{WCT} is unique up to a multiplicative constant\textup:
\eq{
\Phi(t) = t|t|,\qquad t \in \R.
}
Theorem~\ref{MainTh} in this particular case says the inequality
\eq{\label{ExampleIneq}
\Big|\E \big(\BB_\frac12 [F]\big)\big| \BB_\frac12 [F]\big|\Big| \lesssim \|F\|_{L_1}^2
}
holds true for all simple martingales~$F$ with a uniform constant\textup; the operator~$\BB_{\frac12}$ is defined in~\eqref{ExampleTalpha} with~$\alpha = \frac12$.
\end{Ex}

\section{Bellman function}\label{s3}
\begin{Def}
Define the Bellman function~$\Bell_{\Lin,\Phi}\colon \R\times \R^\ell \times \R \to \R \cup \{-\infty,+\infty\}$ by the rule
\eq{\label{Bellman}
\Bell_{\Lin,\Phi}(x,y,z) = \sup\Set{\E \Phi(\BB_\alpha [F] + y)}{F_0 = x,\ \E|F_{\infty}| = z,\quad F \hbox{ is a simple martingale}}.
}
\end{Def}
We will usually suppress the indices in the notation for~$\Bell$ if this does not lead to ambiguity.
\begin{Le}
The value~$\Bell(x,y,z)$ equals~$-\infty$ if and only if
\eq{
(x,y,z) \notin \Omega = \Set{(x,y,z) \in \R\times \R^\ell \times \R}{|x| \leq z}.
}
On the boundary of~$\Omega$\textup, we have
\eq{\label{BC}
\Bell(x,y,|x|) \geq \Phi(y),\qquad x,y\in\R.
}
\end{Le}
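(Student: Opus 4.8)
The plan is to analyze the supremum defining $\Bell(x,y,z)$ directly from the martingale picture. First I would observe that if $(x,y,z)\notin\Omega$, i.e.\ $z<|x|$, then there are simply no competitors: every simple martingale $F$ with $F_0=x$ satisfies $\E|F_\infty|\geq |\E F_\infty| = |F_0| = |x| > z$ by Jensen's inequality and the martingale property, so the constraint set in~\eqref{Bellman} is empty and the supremum over the empty set is $-\infty$ by convention. Conversely, for $(x,y,z)\in\Omega$ I must exhibit at least one admissible simple martingale; the key point is that such a martingale exists, so the supremum is taken over a nonempty set and hence is $>-\infty$ (that it is actually finite is a separate matter not asserted here).

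The construction of an admissible martingale when $|x|\leq z$ is the main step. The idea is to split the unit atom $\F_0$ into its $m$ children and choose the values of $F_1$ on them so that (a) the mean is $x$ and (b) $\E|F_1|$ is exactly $z$, then stop. Concretely, pick two of the $m$ children and put the mass there: set $F_1$ equal to $a$ on the first child-block (of probability $1/m$), to $b$ on the second, and keep $F_1=x$ on the remaining $m-2$ children, arranging $\frac1m a + \frac1m b + \frac{m-2}{m} x = x$, i.e.\ $a+b = 2x$, and $\frac1m|a| + \frac1m|b| + \frac{m-2}{m}|x| = z$. Writing $a = x+t$, $b = x-t$ for $t\geq 0$, the second equation reads $\frac1m(|x+t| + |x-t|) = z - \frac{m-2}{m}|x|$; as $t$ ranges over $[0,\infty)$ the left side increases continuously from $\frac2m|x|$ to $+\infty$, while the right side is $z-\frac{m-2}{m}|x| \geq |x| - \frac{m-2}{m}|x| = \frac2m|x|$, so by the intermediate value theorem a suitable $t$ exists. (When $m=2$ one uses both children and the argument is the same with the last group empty.) This $F$ is a simple martingale with $F_0=x$ and $\E|F_\infty|=z$, establishing $\Bell(x,y,z)>-\infty$ on $\Omega$.

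For the boundary inequality~\eqref{BC}, take $z=|x|$ and use the trivial (constant) martingale $F_n \equiv x$ for all $n$. It is simple, satisfies $F_0=x$, and $\E|F_\infty| = |x| = z$, so it is admissible. For this martingale all martingale differences $f_{k+1}$ vanish, hence $\BB_\alpha[F] = 0$ identically (the defining series~\eqref{ModifiedRiesz} is term-by-term zero), so $\E\Phi(\BB_\alpha[F]+y) = \Phi(y)$. Taking the supremum over all admissible martingales gives $\Bell(x,y,|x|) \geq \Phi(y)$, which is~\eqref{BC}.

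The only genuine obstacle is the existence argument in the second paragraph, and even there the difficulty is merely bookkeeping: one must check the endpoint case $m=2$ and make sure the probabilities of the child-blocks are handled correctly (each child of $\F_0$ has probability $1/m$, and one may further group children to form blocks of the desired total mass). Everything else — the emptiness of the constraint set off $\Omega$, and the use of the constant martingale on $\partial\Omega$ — is immediate from the definitions. I would present the proof in the order above: first the ``only if'' via Jensen, then the ``if'' via the explicit two-point split, then~\eqref{BC} via the constant martingale.
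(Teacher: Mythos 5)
Your proposal is correct and follows essentially the same route as the paper: Jensen's inequality shows the constraint set is empty when $|x|>z$, an explicit one-step witness martingale shows it is nonempty when $|x|\leq z$ (the paper writes down a closed-form two-valued $F_1$ rather than finding $t$ by the intermediate value theorem, but this is the same idea), and the constant martingale yields the boundary inequality~\eqref{BC}.
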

\begin{proof}
If~$|x| > z$, there does not exist a martingale~$F$ such that
\eq{\label{BellmanPoint}
F_0 =x \quad \text{and}\quad \E|F_{\infty}| = z,
}
so in this case the supremum in~\eqref{Bellman} is taken over the empty set, and~$\Bell(x,y,z) = -\infty$. On the other hand, if~$|x| \leq z$, there exists~$F$ such that~\eqref{BellmanPoint} holds true, the martingale
\eq{
F_0 = x,\qquad F_1 = \begin{cases}
\frac{m}{2}(x+z),\quad &\text{with probability }\frac1m;\\
\frac{m}{2(m-1)}(x-z),\quad &\textup{with probability }\frac{m-1}{m},
\end{cases}\qquad F_n = F_1 \quad \text{for larger }n,
}
serves as an example. Therefore, if~$(x,y,z)\in \Omega$, then~$\Bell(x,y,z) > -\infty$. The first assertion of the lemma is proved. 

To prove~\eqref{BC}, we note that if~$|x|=z$, then there is a constant martingale satisfying~\eqref{BellmanPoint}. For this martingale,~$\BB_\alpha[F] = 0$, therefore,
\eq{
\E \Phi(\BB_\alpha [F] + y) = \Phi(y).
}
Thus, by the definition of the Bellman function,~$\Bell(x,y,|x|) \geq \Phi(y)$.
\end{proof}
The set~$\Omega$ is usually called the natural domain (or simply the domain) of~$\Bell$. Though we initially define~$\Bell$ on the whole space~$\R\times \R^\ell  \times \R$, it becomes an interesting object only on~$\Omega$. Inequality~\eqref{BC} may be thought of as boundary conditions.
\begin{Rem}\label{BellmanReformulation}
Inequality~\eqref{MainIneq} is equivalent to the following statement\textup: there exists a constant~$C > 0$ such that for every simple martingale~$F$ the inequalities
\eq{
\E \Phi(\BB_\alpha[F]) \leq C(\E|F_{\infty}|)^p\quad \text{and}\quad -\E \Phi(\BB_\alpha[F]) \leq C(\E|F_{\infty}|)^p
}
hold true. This may be restated as
\eq{
\Bell_{\Lin,\Phi}(x,0,z) \leq Cz^p\quad \text{and}\quad \Bell_{\Lin,-\Phi}(x,0,z) \leq Cz^p.
}
\end{Rem}
Recall that~$\alpha = \frac{p-1}{p}$.  
\begin{Le}
The Bellman function satisfies the {\bf main inequality}\textup: for any~$y \in \R^\ell$ and any collections of real numbers~$\{x_j\}_{j=1}^m$ and~$\{z_j\}_{j=1}^m$ such that~$|x_j| \leq z_j$ for any~$j$\textup, the inequality
\mlt{\label{MainInequality}
\Bell(x,y,z) \geq \frac{1}{m^p}\sum\limits_{j=1}^m\Bell\Big(x_j,m^\alpha y +\big(\Lin[\vec{x}]\big)_j\,, z_j\Big),\\ x=\frac{1}{m}\sum\limits_{j=1}^m x_j,\ z=\frac{1}{m}\sum\limits_{j=1}^m z_j,\ \vec{x} = (x_1-x,x_2-x,\ldots,x_m-x)\in V
}
holds true.
\end{Le}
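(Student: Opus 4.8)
\textit{Proof plan.} The plan is to exploit the self-similar structure of both $\BB_\alpha$ and the class of simple martingales: we glue near-optimal martingales for the $m$ points on the right-hand side of \eqref{MainInequality} into a single martingale admissible for the point $(x,y,z)$. Fix $(x,y,z)$ and collections $\{x_j\}_{j=1}^m$, $\{z_j\}_{j=1}^m$ as in the statement, and let $\eps>0$. Since $|x_j|\le z_j$, each triple $(x_j,\,m^\alpha y+(\Lin[\vec x])_j,\,z_j)$ lies in $\Omega$, so by the previous lemma the corresponding value of $\Bell$ is strictly greater than $-\infty$; for each $j$ choose a simple martingale $F^{(j)}$ with $F^{(j)}_0=x_j$, $\E|F^{(j)}_\infty|=z_j$ and
\[
\E\Phi\big(\BB_\alpha[F^{(j)}]+m^\alpha y+(\Lin[\vec x])_j\big)\ \ge\ \Bell\big(x_j,\,m^\alpha y+(\Lin[\vec x])_j,\,z_j\big)-\eps.
\]
If one of these Bellman values equals $+\infty$, replace the right-hand side here by an arbitrarily large number $M$; the computation below then forces the left-hand side of \eqref{MainInequality} to be $+\infty$ as well, so we may from now on assume all the values involved are finite.

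Next I would build the glued martingale $F$. Let $\Omega_0$ be the unique atom of $\AF_0$ and $\omega_j=\J_{\Omega_0}(j)\in\AF_1$. Put $F_0=x$ and $F_1|_{\omega_j}=x_j$, so that $f_1=\J_{\Omega_0}[\vec x]$ — this is legitimate because $\vec x\in V$, as $\sum_j(x_j-x)=0$. The collection of atoms contained in $\omega_j$, after conditioning on $\omega_j$, forms an $m$-uniform filtered probability space that is canonically and measure-preservingly isomorphic to $(\F_n)_n$ shifted down by one level, the isomorphism respecting the maps $\J$; transport $F^{(j)}$ along it, i.e. set $F_n|_{\omega}=F^{(j)}_{n-1}|_{\omega'}$ whenever $\omega\in\AF_n$, $\omega\subset\omega_j$, corresponds to $\omega'\in\AF_{n-1}$. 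Then $F$ is a simple martingale with $F_0=x$ and $\E|F_\infty|=\frac1m\sum_j\E|F^{(j)}_\infty|=\frac1m\sum_j z_j=z$, hence $F$ is admissible in the definition \eqref{Bellman} of $\Bell(x,y,z)$.

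The key computation is the restriction of $\BB_\alpha[F]$ to $\omega_j$. In the series defining $\BB_\alpha[F]$ the term $k=0$ contributes $m^{-\alpha}\Lin[\vec x]$, which on $\omega_j$ equals $m^{-\alpha}(\Lin[\vec x])_j$; for $k\ge1$ every atom of $\AF_k$ lies in a unique $\omega_j$, and under the isomorphism above the term of $\BB_\alpha[F]$ indexed by $k$ matches, after transport to $\omega_j$, the term of $\BB_\alpha[F^{(j)}]$ indexed by $k-1$, up to the extra factor $m^{-\alpha}$ coming from $m^{-\alpha(k+1)}=m^{-\alpha}\cdot m^{-\alpha((k-1)+1)}$. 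Summing, we obtain on $\omega_j$
\[
\BB_\alpha[F]+y \;=\; m^{-\alpha}\Big(\BB_\alpha[F^{(j)}]+m^\alpha y+(\Lin[\vec x])_j\Big),
\]
where the copy of $\BB_\alpha[F^{(j)}]$ living on $\omega_j$ is understood. Using the positive $p$-homogeneity of $\Phi$ (so $\Phi(m^{-\alpha}v)=m^{-\alpha p}\Phi(v)$, since $m^{-\alpha}>0$), splitting $\E\Phi(\BB_\alpha[F]+y)$ over the atoms $\omega_1,\dots,\omega_m$, each of probability $\frac1m$, and observing that the conditional expectation over $\omega_j$ coincides with the full expectation taken with respect to $F^{(j)}$, we get
\[
\E\Phi(\BB_\alpha[F]+y)\;=\;\frac{m^{-\alpha p}}{m}\sum_{j=1}^m\E\Phi\big(\BB_\alpha[F^{(j)}]+m^\alpha y+(\Lin[\vec x])_j\big)\;=\;\frac{1}{m^p}\sum_{j=1}^m\E\Phi\big(\BB_\alpha[F^{(j)}]+m^\alpha y+(\Lin[\vec x])_j\big),
\]
the last equality because $1+\alpha p=p$ when $\alpha=\frac{p-1}{p}$. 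Combining this with the near-optimality of the $F^{(j)}$ and the admissibility of $F$ gives $\Bell(x,y,z)\ge\E\Phi(\BB_\alpha[F]+y)\ge\frac1{m^p}\sum_j\Bell\big(x_j,m^\alpha y+(\Lin[\vec x])_j,z_j\big)-m^{1-p}\eps$; letting $\eps\to0$ finishes the proof.

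The only genuine obstacle is notational bookkeeping: one must make the "copy of $F^{(j)}$ on the subtree rooted at $\omega_j$" precise, verify it is measure-preserving, and check that the geometric weights $m^{-\alpha(k+1)}$ transform correctly under the one-step shift of scales — this is precisely where the choice of $\alpha$ and the self-similar normalization of $\BB_\alpha$ enter. There is no analytic difficulty beyond the homogeneity of $\Phi$ and the elementary identity $1+\alpha p=p$.
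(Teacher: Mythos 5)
Your proposal is correct and follows essentially the same route as the paper's own proof: gluing $\eps$-optimal martingales for the $m$ right-hand points onto the first-generation atoms, using the self-similar shift identity $(\BB_\alpha[F])\big|_{\omega_j}=m^{-\alpha}\big((\Lin[\vec x])_j+\BB_\alpha[F^{(j)}]\big)$, and then the positive $p$-homogeneity of $\Phi$ together with $1+\alpha p=p$. The only (harmless) addition is your explicit handling of the case where some right-hand Bellman value is $+\infty$, which the paper leaves implicit.
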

\begin{proof}
Let~$F^1, F^2,\ldots, F^m$ be almost optimal martingales for the points~$P_j\in \Omega$, where
\eq{
P_j = (x_j,m^\alpha y +\big(\Lin[\vec{x}]\big)_j, z_j),\quad j \in [1 .. m],
}
in the sense that
\eq{\label{AlmostOptimal}
F^j_0 = x_j,\, \E |F^j_\infty| = z_j,\quad \text{and}\quad \E\Phi\Big(\BB_\alpha[F^j] + m^\alpha y + (\Lin[\vec{x}])_j\Big) \geq \Bell(P_j) -\eps.
}
Here~$\eps$ is an auxiliary small parameter. We glue the martingale~$F$ from~$F^1$,~$F^2$,\ldots,~$F^m$. More specifically, let~$w_1,w_2,\ldots,w_m$ be the elements of~$\AF_1$. We define~$F$ in the following way:
\eq{
F_0 = x, \quad F_1 = x_j\quad \text{on}\ w_j,
}
and the martingale~$F|_{w_j}$ develops like~$F^j$ in the sense that if~$w\subset w_j$ and~$w\in \AF_n$, then
\eq{
F_{n}(w) = F_{n-1}^j(w^\uparrow),
} 
here~$w^\uparrow$ is the parent of~$w$. In particular,
\eq{
f_1 = \J_w[\vec{x}], \quad w\in \AF_0.
}
Then,
\eq{
(\BB_\alpha[F])_n(w) = m^{-\alpha}(\Lin[\vec{x}])_j + m^{-\alpha}(\BB_\alpha[F^j])_{n-1}(w^\uparrow),\qquad w\in \AF_n,\ w \subset w_j.
}
Therefore,
\mlt{
\Bell(x,y,z) \geq \E \Phi(\BB_\alpha [F] + y) = \frac{1}{m}\sum\limits_{j=1}^m\E\Phi\Big(y+ m^{-\alpha}(\Lin[\vec{x}])_j + m^{-\alpha}\BB_\alpha[F^j]\Big)\stackrel{\scriptscriptstyle (1-\alpha)p=1}{=}\\
\frac{1}{m^p}\sum\limits_{j=1}^m\E\Phi\Big(m^\alpha y+(\Lin[\vec{x}])_j +\BB_\alpha[F^j]\Big) \Geqref{AlmostOptimal}\frac{1}{m^p}\sum\limits_{j=1}^m\Bell(P_j) - m^{1-p}\eps.
}
We obtain the desired inequality~\eqref{MainInequality} by choosing arbitrarily small~$\eps$.
\end{proof}
\begin{Rem}
By homogeneity of~$\Phi$\textup,
\eq{\label{Hom}
\Bell(\lambda x,\lambda y,\lambda z) = \lambda^p\Bell(x,y,z),\qquad \lambda > 0,\ (x,y,z)\in\Omega.
}
\end{Rem}
\begin{Ex}[Continuation of Example~\ref{Example2}]\label{ExactFunction}
Let all the parameters be as in Example~\ref{Example2}. Then\textup, the Bellman function is defined on the domain
\eq{
\Omega = \Set{(x,y,z) \in \R^3}{|x| \leq z}.
}
It satisfies the boundary conditions
\eq{\label{BoundIneq}
\Bell(x,y,|x|) \geq y|y|
}
and the main inequality
\eq{\label{MI}
\Bell(x,y,z) \geq \frac19\bigg(\Bell(x_1,\sqrt3 y + x_3 - x_2,z_1) + \Bell(x_2,\sqrt3 y + x_1 - x_3,z_2) + \Bell(x_3,\sqrt3 y + x_2 - x_1,z_3)\bigg),
}
whenever
\eq{
x = \frac{x_1+x_2+x_3}{3}, \quad z = \frac{z_1+z_2+z_3}{3},\quad |x_1|\leq z_1, |x_2|\leq z_2, |x_3|\leq z_3,\quad y\in\R.
}
\end{Ex}
\begin{Cor}\label{NecessityOfPhiCancellation}
Assume that~$\Lin$ is weakly canceling\textup, i.e.~\eqref{WC} holds true. Then\textup, if~\eqref{MainIneq} holds true\textup,~$\Phi$ is~$\Lin$-canceling.
\end{Cor}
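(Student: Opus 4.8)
The plan is to extract the $\Lin$-canceling condition \eqref{WCT} from \eqref{MainIneq} by testing the inequality on a carefully chosen one-parameter family of simple martingales that, in the limit, behave like delta measures. Fix an index $j \in [1\,..\,m]$. I would build a martingale $F$ that, at the first splitting, sends mass so that one of the $m$ kids of the root receives almost all the $L_1$-mass, mimicking the vector $D_j$: concretely, take $F_0 = 0$ and let the martingale difference $f_1$ on the root atom be proportional to a small multiple $\eps$ of $D_j$, i.e. $f_1 = \J_\omega[\eps D_j]$, and then stop (so $F_1 = F_\infty$). For this martingale $\E|F_\infty| = \eps\cdot\frac{1}{m}\big((m-1)+(m-1)\big) = \frac{2(m-1)}{m}\eps$, which is $O(\eps)$, so the right-hand side of \eqref{MainIneq} is $O(\eps^p)$.

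The next step is to compute $\BB_\alpha[F]$ for this martingale. Since only the first martingale difference is nonzero, from \eqref{ModifiedRiesz} we get $\BB_\alpha[F] = m^{-\alpha}\,\J_\omega\big[\Lin[\eps D_j]\big] = \eps\, m^{-\alpha}\,\Lin[D_j]$ on the whole space, which takes the value $\eps\, m^{-\alpha}\,(\Lin[D_j])_i$ on the $i$th kid of the root (each kid having probability $\frac1m$). Hence, using positive $p$-homogeneity of $\Phi$,
\begin{equation*}
\E\Phi(\BB_\alpha[F]) = \frac{1}{m}\sum_{i=1}^m \Phi\big(\eps\, m^{-\alpha}(\Lin[D_j])_i\big) = \frac{\eps^p m^{-\alpha p}}{m}\sum_{i=1}^m \Phi\big((\Lin[D_j])_i\big).
\end{equation*}
Now the weak cancellation hypothesis \eqref{WC} says $(\Lin[D_j])_j = 0$, so $\Phi\big((\Lin[D_j])_j\big) = \Phi(0) = 0$ (here I use that a positively $p$-homogeneous function with $p>0$ vanishes at the origin), and the sum over $i$ collapses to the sum over $i\ne j$. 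Therefore $\E\Phi(\BB_\alpha[F]) = \frac{\eps^p m^{-\alpha p}}{m}\sum_{i\ne j}\Phi\big((\Lin[D_j])_i\big)$.

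Plugging into \eqref{MainIneq} with $\alpha = \frac{p-1}{p}$ gives
\begin{equation*}
\Big|\frac{\eps^p m^{-\alpha p}}{m}\sum_{i\ne j}\Phi\big((\Lin[D_j])_i\big)\Big| \lesssim \Big(\frac{2(m-1)}{m}\eps\Big)^p = C\eps^p,
\end{equation*}
and after cancelling the common factor $\eps^p$ this is a bound on $\big|\sum_{i\ne j}\Phi((\Lin[D_j])_i)\big|$ by an absolute constant $C'$ — which is already useless for forcing the sum to be zero. The fix, and the one real point to get right, is to exploit homogeneity of the family itself rather than just of $\Phi$: since the left side of \eqref{MainIneq} scales like $\eps^p$ and the right side also scales like $\eps^p$, the bare inequality only controls a constant; instead I should iterate the construction over many scales. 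Replace the single split by a martingale that, at each of the first $K$ generations and on \emph{every} atom, performs the $\eps D_j$-split described above (rescaled appropriately so that the total $L_1$ norm stays $O(1)$). Each generation contributes an independent copy of $\frac{\eps^p m^{-\alpha p}}{m}\sum_{i\ne j}\Phi((\Lin[D_j])_i)$, discounted by the fractional weights $m^{-\alpha k}$ coming from \eqref{ModifiedRiesz} — but, crucially, the exponent $\alpha = \frac{p-1}{p}$ is exactly the critical one at which these contributions do \emph{not} decay after raising to the power appearing in $\E\Phi$, so that after $K$ generations the left side grows like $K\cdot c\cdot\eps^p$ while the right side stays $\lesssim \eps^p$. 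Letting $K\to\infty$ forces $c = \sum_{i\ne j}\Phi\big((\Lin[D_j])_i\big) = 0$, which is precisely \eqref{WCT}; since $j$ was arbitrary, $\Phi$ is $\Lin$-canceling. The main obstacle is bookkeeping this multi-scale construction so that the $L_1$-norm is genuinely kept bounded while the $K$ independent canceling-sum contributions to $\E\Phi(\BB_\alpha[F])$ add up coherently (in particular checking they do not partially cancel across scales, which is where one uses that on each atom the contribution has a fixed sign determined by $c$ alone, independent of the atom); once the scaling exponents are tracked correctly the conclusion is immediate.
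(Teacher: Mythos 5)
Your single-split computation is correct, and you correctly diagnose that it only bounds $\big|\sum_{i\ne j}\Phi((\Lin[D_j])_i)\big|$ by a constant, so some iteration at the critical scaling is indeed the missing ingredient --- that is exactly the mechanism behind the paper's proof. However, the iteration you commit to does not work as described, and the step where you dismiss cross-scale interference is wrong. If you perform a $D_j$-type split on \emph{every} atom of the first $K$ generations, then on a terminal atom $\BB_\alpha[F]$ is a \emph{sum} over $k$ of terms $m^{-\alpha(k+1)}\eps_k(\Lin[D_j])_{i_k}$, and $\E\Phi(\BB_\alpha[F])$ involves $\Phi$ evaluated at this sum. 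Since $\Phi$ is merely positively $p$-homogeneous and not additive, this does not decompose into $K$ copies of the one-scale quantity, and the sign of an atom's contribution is \emph{not} ``determined by $c$ alone'': the scales mix inside the argument of $\Phi$, not outside it. It is also unclear how to choose the $\eps_k$ so that $\|F\|_{L_1}$ stays bounded while the alleged contributions accumulate linearly in $K$.

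The construction that does work --- and which is what the paper's Bellman argument encodes --- concentrates rather than spreads: at each generation you split only the single atom carrying all the mass, with difference proportional to $D_j$ scaled so that the value on the distinguished kid is multiplied by $m$ (so $F$ runs $1\to m\to m^2\to\cdots$ along one branch, while the $m-1$ siblings drop to $0$ and stop). Then each stopped atom of generation $k$ sees exactly \emph{one} term of the series defining $\BB_\alpha[F]$, so there is no interference; the critical relation $\alpha p=p-1$ makes each generation contribute exactly $m^{-p}\sum_{i\ne j}\Phi((\Lin[D_j])_i)$ to $\E\Phi(\BB_\alpha[F])$; the $L_1$-norm equals $1$ for every $K$; and the weak cancellation $(\Lin[D_j])_j=0$ is used not merely to drop one term from the one-scale sum (where $\Phi(0)=0$ would do) but to guarantee that the surviving heavy branch accumulates no $\BB_\alpha$-value at all --- otherwise its contribution $m^{-K}\Phi(\text{large})$ need not vanish. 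Letting $K\to\infty$ and applying \eqref{MainIneq} forces the sum to be $0$. The paper compresses this entire iteration into one application of the main inequality for $\Bell_{\Lin,\pm\Phi}$ at $(1,0,1)$ with $\vec{x}=D_1$, using the homogeneity $\Bell(m,0,m)=m^p\Bell(1,0,1)$ to absorb the factor $m^{-p}$ and the finiteness of $\Bell(1,0,1)$ (a consequence of \eqref{MainIneq}) to cancel it from both sides. You should either adopt the concentrated construction or the Bellman shortcut; as written, your multi-scale step fails.
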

\begin{proof}
Assume~\eqref{MainIneq} holds true, in particular, the values of the Bellman functions~$\Bell_{\Lin,\Phi}$ and~$\Bell_{\Lin,-\Phi}$ at the point~$(1,0,1)$ are finite (see Remark~\ref{BellmanReformulation}). We apply the main inequality to the points~$x_1=z_1 = m$ and~$x_j=y_j=0$ for~$j=2,3,\ldots,m$, and~$y=0$. In this case,~$\vec{x} = D_1$. Then,
\mlt{
\Bell_{\Lin,\Phi}(1,0,1) \geq \frac{1}{m^p}\Big(\Bell_{\Lin,\Phi}(m,0,m) + \sum\limits_{j=2}^m\Bell_{\Lin,\Phi}\big(0,(\Lin[D_1])_j,0\big)\Big) \GeqrefTwo{BC}{Hom}\\  \Bell_{\Lin,\Phi}(1,0,1) + m^{-p}\sum\limits_{j=2}^m\Phi((\Lin[D_1])_j),
}
which implies
\eq{
\sum\limits_{j=2}^m\Phi((\Lin[D_1])_j) \leq 0.
}
The same reasoning for~$\Bell_{\Lin,-\Phi}$ implies the reverse inequality. Finally, we may replace the index~$1$ with arbitrary~$j\in [1\,..\,m]$, apply the same reasoning, and obtain~\eqref{WCT}.
\end{proof}
\begin{Def}
A function~$G\colon\Omega \to \mathbb{R}$ is called a {\bf supersolution} if it satisfies the boundary condition~\eqref{BC} and the main inequality~\eqref{MainInequality}.
\end{Def}
\begin{Le}\label{SupersolutionLemma}
If~$G$ is a supersolution\textup, then~$G(x,y,z) \geq \Bell(x,y,z)$ for any~$(x,y,z)\in \Omega$. 
\end{Le}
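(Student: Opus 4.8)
The plan is to prove that any supersolution $G$ dominates the Bellman function $\Bell$ by a standard martingale induction on scales: since $\Bell(x,y,z)$ is a supremum over simple martingales, it suffices to show that $G(F_0, y, \E|F_\infty|) \geq \E\Phi(\BB_\alpha[F] + y)$ for every simple martingale $F$ and every $y \in \R^\ell$. I would fix such an $F$, let $N$ be the stopping time after which $F$ is constant, and run the induction downward from level $N$ to level $0$.

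\medskip

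First I would introduce, for each $n$ and each atom $\omega \in \AF_n$, the ``partial'' objects one gets by looking at the martingale inside $\omega$: the average $F_n(\omega)$, the terminal conditional $L^1$-size $z_n(\omega) = \E(|F_\infty| \mid \F_n)(\omega)$, and the shifted argument $y_n(\omega)$ which accumulates the contributions $\Lin[f_{k+1}|_{\omega'}]$ along the branch from the root down to $\omega$, rescaled so that $\BB_\alpha[F] + y$ restricted to $\omega$ looks (after the homogeneity rescaling by $m^{\alpha n}$) like the corresponding object for the sub-martingale $F|_\omega$ started at scale $0$ with shift $y_n(\omega)$. The key bookkeeping identity, already essentially computed in the proof of the main inequality, is that on a kid $\omega' = \J_\omega(j)$ one has $y_{n+1}(\omega') = m^\alpha y_n(\omega) + (\Lin[\vec x])_j$ where $\vec x$ is the vector of the kids' values recentered, together with the scaling relation $(1-\alpha)p = 1$ that converts the factor $m^{-\alpha}$ inside $\Phi$ into the factor $m^{-p}$ outside. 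I would then define
\[
\Xi_n = \sum_{\omega \in \AF_n} m^{-(p)n}\,\Pr(\omega)\cdot m^{p n}\, G\big(F_n(\omega), y_n(\omega), z_n(\omega)\big),
\]
or more cleanly, absorbing the rescaling, I would track the quantity $\E\big[m^{\text{(something)}} G(\cdot)\big]$ designed so that the main inequality~\eqref{MainInequality} applied atom-by-atom at level $n$ gives exactly $\Xi_n \geq \Xi_{n+1}$. (The cleanest implementation is to rescale each sub-martingale by $m^{\alpha n}$ using~\eqref{Hom}, so that no powers of $m$ need to be carried and the inequality is the bare~\eqref{MainInequality}.)

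\medskip

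The chain of inequalities $\Xi_0 \geq \Xi_1 \geq \cdots \geq \Xi_N$ is then immediate from the main inequality, since at each step we split every atom of $\AF_n$ into its $m$ kids and $|x_j| \leq z_j$ holds because $z_{n+1}(\omega')$ is the conditional expectation of $|F_\infty|$ and dominates $|F_{n+1}(\omega')|$ by Jensen. At the top, $\Xi_0 = G(F_0, y, \E|F_\infty|)$; at the bottom, since $F$ is constant after level $N$, every atom $\omega \in \AF_N$ has $z_N(\omega) = |F_N(\omega)|$, so the boundary condition~\eqref{BC} gives $G(F_N(\omega), y_N(\omega), |F_N(\omega)|) \geq \Phi(y_N(\omega))$, and summing (with the homogeneity rescaling undone) yields $\Xi_N \geq \E\Phi(\BB_\alpha[F] + y)$ because $\BB_\alpha[F] + y$ is precisely the function equal to the appropriately rescaled $y_N(\omega)$ on each $\omega \in \AF_N$. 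Taking the supremum over $F$ and over nothing more (the $y$ is fixed) gives $G(x,y,z) \geq \Bell(x,y,z)$.

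\medskip

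I expect the main obstacle to be purely notational: setting up the rescaled sub-martingale bookkeeping so that the homogeneity relation~\eqref{Hom}, the identity $(1-\alpha)p = 1$, and the recursion for $y_n(\omega)$ all fit together without a proliferation of powers of $m$. Once the definition of $\Xi_n$ is chosen correctly, the inductive step is a one-line application of~\eqref{MainInequality} and the terminal step is a one-line application of~\eqref{BC}; there is no genuine analytic difficulty, only the care needed to verify that the partial object $y_N(\omega)$ really does reassemble into $\BB_\alpha[F] + y$.
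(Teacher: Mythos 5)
Your proposal is correct and is essentially the paper's own argument: the paper shows that the process $m^{-(p-1)n}G\big(F_n,\,m^{\alpha n}(y_0+(\BB_\alpha[F])_n),\,\E(|F_\infty|\mid\F_n)\big)$ is a supermartingale (which is exactly your chain $\Xi_0\geq\Xi_1\geq\cdots\geq\Xi_N$ with the normalizing factor $m^{-(p-1)n}$), with the inductive step being the main inequality applied atom-by-atom and the terminal step being the boundary condition plus $p$-homogeneity. The only detail left implicit in your write-up --- the precise power of $m$ in $\Xi_n$ --- is indeed $m^{-(p-1)n}$, fixed by the relation $(1-\alpha)p=1$ exactly as you anticipated.
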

In particular, if there exists a supersolution, then the Bellman function is finite.
\begin{proof}
We will prove that the process
\eq{\label{Supermartingale}
\bigg\{m^{-(p-1)n}G\Big(F_n, m^{\alpha n}\big(y_0 + (\BB_\alpha[F])_n\big), \E(|F_\infty|\mid \F_n)\Big)\bigg\}_n
}
is a supermartingale provided~$G$ satisfies the main inequality and~$F$ is an arbitrary simple martingale. Let us first show that this assertion implies the lemma. For that pick arbitrary~$(x,y,z)\in\Omega$ and an arbitrary simple martingale~$F$ that fulfills~\eqref{BellmanPoint}. Since the process~\eqref{Supermartingale} is a supermartingale,
\mlt{
G(x,y,z) = G\big(F_0, y, \E|F_\infty|\big) =\E G\big(F_0,y+(\BB_\alpha[F])_0, \E(|F_\infty|\mid \F_0)\big) \geq\\ m^{-(p-1)N} \E G(F_N, m^{\alpha N}(y + (\BB_\alpha[F])_N), \E(|F_\infty|\mid \F_N)) =\\ m^{-(p-1)N} \E G(F_{\infty}, m^{\alpha N}(y + \BB_\alpha[F]), |F_\infty|),
}
provided~$N$ is sufficiently large.  The latter quantity is not smaller than
\eq{
m^{-(p-1)N}\E \Phi\big(m^{\alpha N}(y+\BB_\alpha[F])\big) =\E \Phi(y+\BB_\alpha[F]),
} since we assume~$G$ satisfies the boundary condition~\eqref{BC}. Thus,
\eq{
G(x,y,z) \geq \E \Phi(y+\BB_\alpha[F])
}
for arbitrary~$F$. Taking supremum over all admissible martingales~$F$, we obtain~$G(x,y,z) \geq \Bell(x,y,z)$.

It remains to prove~\eqref{Supermartingale} is a supermartingale. Let~$w\in \AF_n$, let~$w_1,w_2,\ldots,w_m$ be its kids in~$\AF_{n+1}$. Define
\eq{
y = m^{\alpha n}\Big(y_0 + (\BB_\alpha[F])_{n}(w)\Big);\quad x_j = F_{n+1}(w_j),\ z_j = \E(|F_\infty|\mid \F_{n+1})(w_j),\qquad j \in [1\,..\,m].
}
By martingale properties,
\eq{
x = F_n(w),\ z = \E(|F_\infty|\mid F_n)(w),\quad \text{and} \ \vec{x} = \J_{w}^{-1}[f_{n+1}|_w],
}
in the sense that these quantities satisfy the same relations as they do in the main inequality~\eqref{MainInequality}. Consequently,
\eq{
G\Big(F_n, m^{\alpha n}\big(y_0 + (\BB_\alpha[F])_n\big),\E(|F_\infty|\mid \F_n)\Big)(w) = G(x,y,z)
}
and
\mlt{
G\Big(F_{n+1}, m^{\alpha(n+1)}\big(y_0 + (\BB_\alpha[F])_{n+1}\big),\E(|F_\infty|\mid \F_{n+1})\Big)(w_j) =\\ G(x_j,m^{\alpha}y + (\Lin[\vec{x}])_j,z_j),\quad j\in [1\,..\,m].
}
Thus, the inequality
\mlt{
m^{-(p-1)n} G\Big(F_n, m^{\alpha n}\big(y_0 + (\BB_\alpha[F])_n\big),\E(|F_\infty|\mid \F_n)\Big)(w) \leq \\m^{-(p-1)(n+1)}\E\Big(G\Big(F_{n+1}, m^{\alpha(n+1)}\big(y_0 + (\BB_\alpha[F])_{n+1}\big),\E(|F_\infty|\mid \F_{n+1})\Big)\,\Big|\; \F_{n}\Big)(w)
}
coincides with~\eqref{MainInequality} with~$G$ in the role of~$\Bell$. Since~\eqref{Supermartingale} is adapted to~$\F$, the latter inequality confirms~\eqref{Supermartingale} is a supermartingale.
\end{proof}
In the theorem below~$|y|$, where~$y \in \R^\ell$, means the usual Euclidean norm of a vector~$y \in \R^\ell$. 
\begin{Th}\label{Supersolution}
Let~$\Lin$ be weakly canceling and let~$\Phi$ be~$\Lin$-canceling. If~$p \leq 2$\textup, then the function
\eq{\label{SS1}
G(x,y,z) = \Phi(y) + C_1\min(|y|^{p-1}z, |y|z^{p-1}) + C_2z^p
}
is a supersolution provided~$C_1$ is sufficiently large and~$C_2$ is sufficiently large \textup(depending on~$C_1$\textup). If~$p \geq 2$\textup, then the function
\eq{\label{SS2}
G(x,y,z) = \Phi(y) + C_1 \Big(|y|^{p-1}z + |y|z^{p-1}\Big) + C_2 z^p
}
is a supersolution under the same assumption on~$C_1$ and~$C_2$.
\end{Th}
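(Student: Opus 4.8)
The plan is to check the two defining properties of a supersolution for the function $G$ in \eqref{SS1} (resp. \eqref{SS2}). The boundary condition \eqref{BC} is immediate, since on $\{z=|x|\}$ the last two summands of $G$ are nonnegative. All the work is in the main inequality \eqref{MainInequality}. Write $M(y,z)$ for the middle summand, i.e. $M(y,z)=\min(|y|^{p-1}z,|y|z^{p-1})$ if $p\le2$ and $M(y,z)=|y|^{p-1}z+|y|z^{p-1}$ if $p\ge2$; both $M$ and $z^p$ are positively $p$-homogeneous in $(y,z)$ jointly, and $G$ does not depend on $x$. Setting $v_j=m^{-\alpha}(\Lin[\vec x])_j$ and $\zeta_j=m^{-\alpha}z_j$, the relation $\alpha p=p-1$ together with the homogeneity $G(\lambda y,\lambda z)=\lambda^pG(y,z)$ recasts \eqref{MainInequality} as the equivalent inequality
\[
G(y,z)\ \ge\ \frac1m\sum_{j=1}^m G\big(y+v_j,\zeta_j\big),
\]
where now $\sum_jv_j=0$ (because $\Lin[\vec x]\in V\otimes\R^\ell$), $\sum_j\zeta_j=m^{1/p}z$, and $|v_j|\le C_\Lin z$, $0\le\zeta_j\le m^{1/p}z$ with a constant $C_\Lin=C_\Lin(\Lin,m)$ (the bound on $v_j$ uses $|x_j|\le z_j$, whence $\|\vec x\|\lesssim z$).

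Substituting $G=\Phi+C_1M+C_2z^p$ and using $\frac1m\sum_j\Phi(y)=\Phi(y)$, this becomes
\[
C_1\Big(M(y,z)-\frac1m\sum_{j}M(y+v_j,\zeta_j)\Big)+C_2\Big(z^p-\frac1m\sum_{j}\zeta_j^p\Big)\ \ge\ \frac1m\sum_{j}\big(\Phi(y+v_j)-\Phi(y)\big).
\]
The $C_2$-bracket is $\ge0$ by superadditivity of $t\mapsto t^p$ on $[0,\infty)$, and $p$-homogeneity plus local Lipschitzness of $\Phi$ give the bound $|\Phi(a)-\Phi(b)|\le C_\Phi(|a|+|b|)^{p-1}|a-b|$, which controls the right-hand side. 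I would then fix a large threshold $K=K(\Lin,m,p)$ and split. In the \textbf{bulk regime} $|y|\ge Kz$, every $|v_j|\le C_\Lin z$ is small compared with $|y|$, so the right-hand side is $\le C'|y|^{p-1}z$ with $C'=C'(\Phi,\Lin,m,p)$; meanwhile, since $\frac1m\sum_j\zeta_j=m^{-\alpha}z$, $\max_j\zeta_j\le m^{1/p}z$, and $m^{-\alpha}<1$, an elementary computation (valid both for $p\le2$ and $p\ge2$) yields $M(y,z)-\frac1m\sum_jM(y+v_j,\zeta_j)\ge c\,|y|^{p-1}z$ with $c=c(\Lin,m,p)>0$. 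Choosing $C_1$ large disposes of this regime.

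In the complementary regime $|y|<Kz$ all quantities have size $\lesssim_K z$. The obstruction is that the $C_2$-bracket vanishes exactly on the \emph{delta configurations}, namely when a single $\zeta_i$ carries the whole mass $m^{1/p}z$; these are precisely the configurations with $\vec x=sD_i$, $|s|\le z$. There, weak cancellation \eqref{WC} forces $v_i=0$, hence $M(y+v_i,\zeta_i)=M(y,m^{1/p}z)$ and $M(y+v_j,\zeta_j)=0$ for $j\ne i$, so the $M$-bracket equals $M(y,z)-\frac1mM(y,m^{1/p}z)$, which is $\ge(1-m^{-\alpha})M(y,z)\ge0$ — this is the point where the explicit form of $M$ and the split $p\le2$ / $p\ge2$ are used. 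The right-hand side reduces to $\frac1m\sum_{j\ne i}\big(\Phi(y+v_j)-\Phi(y)\big)$ with $v_j=m^{-\alpha}s(\Lin[D_i])_j$; writing $\Phi(y+v_j)-\Phi(y)=\big(\Phi(y+v_j)-\Phi(v_j)\big)+\Phi(v_j)-\Phi(y)$ and using the $\Lin$-cancellation \eqref{WCT} (applied to $D_i$ when $s>0$ and to $-D_i$ when $s<0$, both of which are necessary for \eqref{MainIneq} by the argument of Corollary~\ref{NecessityOfPhiCancellation}) to annihilate $\sum_{j\ne i}\Phi(v_j)$, the surviving terms are $\lesssim_{\Phi,\Lin,m}M(y,z)$, so a large $C_1$ covers the delta configurations as well. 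A quantitative continuity argument then extends the estimate to a neighbourhood of the delta configurations, where the shrinking $C_2$-bracket and the (also shrinking) off-diagonal terms $M(y+v_j,\zeta_j)$, $j\ne i$, are matched term by term; away from that neighbourhood the $C_2$-bracket is bounded below by a positive multiple of $z^p$, so a sufficiently large $C_2$ — chosen after $C_1$, as the low-$|y|$ estimates involve $C_1$ — closes the argument. I expect this last step to be the main obstacle: reconciling the exact cancellation enforced at the delta configurations with the degeneracy of the available $C_2$-slack near them, while keeping the $p\le2$ and $p\ge2$ bookkeeping of the $M$-estimates straight.
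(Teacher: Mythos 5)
Your architecture is close to the paper's: you treat the three summands of $G$ separately, identify $z^p-m^{-p}\sum_j z_j^p=m^{-p}\Psi(\vec z)$ with $\Psi(\vec z)=(\sum_j z_j)^p-\sum_j z_j^p\ge 0$ as the available slack, use the $C_1$-term to absorb the $\Phi$-defect when $|y|\gtrsim z$, and correctly recognize the delta configurations ($\vec z$ proportional to $e_i$, hence $\vec x=\lambda D_i$) as the locus where the slack degenerates and where \eqref{WC} and \eqref{WCT} must enter. The bulk regime $|y|\ge Kz$ and the computation at the \emph{exact} delta configurations are sound.

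The genuine gap is the step you yourself flag as "the main obstacle": passing from the exact delta configurations to a neighbourhood of them. This is not a soft continuity argument to be deferred — it is where the proof actually lives. The paper's resolution is Lemma~\ref{PsiLemma}: on the normalized sphere $\sum_j z_j^p=1$ one has the \emph{first-order} lower bound $\Psi(\vec z)\gtrsim|\vec z-e_i|$ near $e_i$ (via Bernoulli's inequality and $p>1$); since each error term — $\sum_j\Min((\Lin[\vec x])_j,z_j)$ in Lemma~\ref{MinLemma} and $\bigl|\sum_j\Phi((\Lin[\vec x])_j)\bigr|$ in Lemma~\ref{PhiLemma} — is a Lipschitz function on the compact normalized set that vanishes on the delta configurations, it is automatically $\lesssim\Psi(\vec z)$, uniformly and with no case analysis near/far from the delta set. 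Your plan to match the "shrinking $C_2$-bracket" against the "shrinking off-diagonal terms" term by term must reproduce exactly this linear-versus-linear comparison; a generic nonnegative slack vanishing quadratically would \emph{not} suffice, so the claim cannot be waved through. (This comparison is also the reason the $p\le2$ case uses $\min(|y|^{p-1}z,|y|z^{p-1})$: local Lipschitzness of the error in $\vec z$ fails for $|y|^{p-1}z$ alone when $p<2$, cf.\ Lemma~\ref{MinLip}.) A secondary flaw: at a delta configuration with $s<0$ you invoke \eqref{WCT} for $-D_i$ and justify it because it is "necessary for \eqref{MainIneq}"; in the sufficiency direction this is circular, since \eqref{MainIneq} is the conclusion, not a hypothesis. (The paper tacitly applies \eqref{WCT} to $\lambda D_j$ with $\lambda$ of either sign as well, so the issue is shared, but your justification as written is not admissible.)
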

In view of Remark~\ref{BellmanReformulation} and Lemma~\ref{SupersolutionLemma}, Theorem~\ref{Supersolution} implies Theorem~\ref{MainTh}.

\section{Auxiliary functions and their properties}\label{s4}
We collect the properties of special functions used in the proof of Theorem~\ref{Supersolution}. The verification of these properties is often elementary.

Let~$\Psi\colon (\R_+)^m \to \R$ be defined by the rule
\eq{\label{Psi}
\Psi(z_1,z_2,\ldots,z_m) = \Big(\sum\limits_{j=1}^mz_j\Big)^p - \sum\limits_{j=1}^m z_j^p.
}
The function~$\Psi$ is positively $p$-homogeneous  and locally Lipschitz.
\begin{Le}\label{PsiLemma}
The function~$\Psi$ is non-negative. It vanishes only at the points~$\lambda e_j$\textup,~$\lambda > 0$ and~$j\in [1\,..\,m]$\textup, where
\eq{
e_j = (\underbrace{0,0,\ldots, 0}_{j-1}, 1, 0,\ldots, 0)
}
are the standard basic vectors. For each~$j \in [1\,..\,m]$\textup, if~$\sum_{j=1}^m z_j^p=1$ and~$\vec z$ lies in a neighborhood of~$e_j$\textup, then
\eq{\label{OriginalAssert}
\Psi(\vec z - e_j) \gtrsim |\vec z-e_j|,\qquad \vec{z} = (z_1,z_2,\ldots,z_m).
}
\end{Le}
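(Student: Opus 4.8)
The plan is to analyze $\Psi$ on the unit sphere $\{\sum z_j^p = 1\} \cap (\R_+)^m$, which by $p$-homogeneity controls the behavior everywhere. First I would prove non-negativity: since $p > 1$ and all $z_j \geq 0$, the power function $t \mapsto t^p$ is superadditive on $\R_+$, so $\sum_j z_j^p \leq (\sum_j z_j)^p$, with equality precisely when at most one $z_j$ is nonzero. This simultaneously gives that $\Psi$ vanishes exactly on the rays $\lambda e_j$, $\lambda > 0$ (the origin being excluded or included depending on convention, but on the punctured set the zero set is these rays). To be careful about the equality case I would note that for $a, b > 0$ one has $(a+b)^p > a^p + b^p$ strictly, by the mean value theorem or by writing $(a+b)^p - a^p = \int_0^b p(a+t)^{p-1}\,dt > \int_0^b p t^{p-1}\,dt = b^p$; iterating handles more than two nonzero coordinates.

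The substantive part is the local lower bound~\eqref{OriginalAssert} near $e_j$. Fix $j$, say $j = 1$ for notational convenience, and restrict to the sphere $\sum_i z_i^p = 1$. Near $e_1$ we write $z_1 = (1 - \sum_{i \geq 2} z_i^p)^{1/p}$ and treat $(z_2, \ldots, z_m)$ as small parameters $t_i \geq 0$; then $|\vec z - e_1| \asymp (\sum_{i\geq 2} t_i^2)^{1/2}$, and I want to show $\Psi(\vec z - e_1) \gtrsim (\sum_{i \geq 2} t_i^2)^{1/2}$. Now $\vec z - e_1 = (z_1 - 1, t_2, \ldots, t_m)$ with $z_1 - 1 < 0$ and $|z_1 - 1| = O(\sum t_i^p)$, which is of higher order than $\sum t_i$ since $p > 1$ and the $t_i$ are small. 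So $\Psi(\vec z - e_1) = \big(\sum_i (\vec z - e_1)_i\big)^p - \sum_i |(\vec z - e_1)_i|^p$, and the key point is that the linear term $\sum_{i \geq 2} t_i$ dominates: the first summand is $\big((z_1 - 1) + \sum_{i\geq2} t_i\big)^p$ with the bracket equal to $\sum_{i \geq 2} t_i (1 + o(1))$, hence of size $\asymp (\sum t_i)^p$, while the subtracted terms $|z_1-1|^p + \sum t_i^p$ are of size $\asymp (\sum t_i)^p$ as well — so I must extract the gain. The clean way: on the sphere, parametrize and show $\Psi(\vec z - e_1) \geq c\,\big(\sum_{i\geq 2}t_i\big)$ directly. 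Indeed $\big(\sum_{i\geq2}t_i\big)^p \leq$ something doesn't help; instead note $\Psi$ is positively $1$-homogeneous? No — it is $p$-homogeneous. The correct mechanism is: write $s = \sum_{i\geq 2} t_i$ (comparable to $|\vec z - e_1|$ up to constants since all coordinates are nonneg and bounded), factor out $s$, and observe $\Psi(\vec z - e_1)/s \to$ a positive limit as $s \to 0^+$ would give a bound like $\Psi \gtrsim s$ only if the ratio is bounded below — but $\Psi$ being $p$-homogeneous with $p>1$ forces $\Psi(\vec z - e_1) = O(s^p) = o(s)$, contradicting~\eqref{OriginalAssert}.

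Let me reconsider: the point is that $\vec z - e_1$ does \emph{not} shrink to zero proportionally in all coordinates — the first coordinate $z_1 - 1$ is \emph{second order}, not first order. So $\vec z - e_1$ is, to leading order, the vector $(0, t_2, \ldots, t_m)$ whose first coordinate is negligible. Then $\Psi(0, t_2, \ldots, t_m) = \big(\sum_{i\geq2} t_i\big)^p - \sum_{i\geq2} t_i^p \geq \big(\sum t_i\big)^p - \big(\sum t_i\big)^{p-1}\max_i t_i$; hmm, this is still $O(s^p)$. So~\eqref{OriginalAssert} as literally stated, $\Psi(\vec z - e_j) \gtrsim |\vec z - e_j|$ with $|\vec z - e_j| \asymp s$, would require $s^p \gtrsim s$, i.e. $s \gtrsim 1$, which fails for small $s$. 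Therefore I suspect the intended reading is that $\Psi(\vec z - e_j) \gtrsim |\vec z - e_j|^p$, or there is an implicit normalization; assuming the paper's conventions make the inequality correct (perhaps $\Psi$ here is secretly being compared after a rescaling, or $p$ is being used where the first power is meant on the normalized sphere), the proof reduces to: $\Psi$ is continuous, positive on the compact set $\{\sum z_i^p = 1,\ |\vec z - e_j| \geq \delta\}$ hence bounded below there, and near $e_j$ a Taylor/homogeneity expansion of $\Psi(\vec z - e_j)$ in the small coordinates $t_i$ gives the matching lower bound with the stated power.

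\medskip

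\textbf{The main obstacle.} The delicate step is the local estimate near the vanishing rays: one must correctly identify the order of vanishing of $\Psi$ at $e_j$. Because the constraint $\sum z_i^p = 1$ pins $z_j - 1$ at \emph{second} order in the transverse coordinates, the effective expansion is $\Psi(\vec z - e_j) = \Psi(0, t_2,\ldots, \widehat{t_j}, \ldots) + (\text{higher order})$, and $\Psi$ restricted to the coordinate hyperplane $\{z_j = 0\}$ is again of the same form in $m-1$ variables, vanishing only at the lower-dimensional rays. So the right tool is an inductive/scaling argument: reduce to the face $z_j = 0$, use that on that face $\Psi$ is bounded below on the unit sphere away from its own zero set, combine with the homogeneity degree to pass from the sphere to a neighborhood, and control the $z_j - 1 \neq 0$ perturbation by Lipschitz continuity of $\Psi$ together with $|z_j - 1| \lesssim (\sum_{i \neq j} t_i)^p \ll \sum_{i\neq j} t_i$. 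Getting the bookkeeping of powers consistent with the exact statement~\eqref{OriginalAssert} — in particular pinning down whether the right side should carry exponent $1$ or $p$ under the paper's normalization — is where care is required; the rest is the superadditivity inequality and a compactness argument.
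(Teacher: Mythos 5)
Your treatment of non-negativity and of the zero set is fine. The problem is with the second assertion. You correctly sense that, read literally, $\Psi(\vec z - e_j) \gtrsim |\vec z - e_j|$ cannot hold --- a continuous positively $p$-homogeneous function evaluated at $\vec z - e_j$ is $O(|\vec z - e_j|^p) = o(|\vec z - e_j|)$ --- but you then resolve the tension in the wrong direction: the misprint is in the argument of $\Psi$, not in the exponent on the right. The inequality that is actually meant (and used) is
\[
\Psi(\vec z) \gtrsim |\vec z - e_j|,
\]
with $\Psi$ evaluated at $\vec z$ itself; note also that $\vec z - e_j \notin (\R_+)^m$, so $\Psi(\vec z - e_j)$ is not even defined by \eqref{Psi}. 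The linear (first-power) lower bound is exactly what the applications in Lemmas~\ref{MinLemma} and~\ref{PhiLemma} require, where a Lipschitz function vanishing at the points $e_j$ is dominated by $\Psi(\vec z)$; your candidate replacement with exponent $p$ on the right would be useless there, and it is in fact also false for $\Psi(\vec z-e_j)$: taking $\vec z = (z_1,t,0,\dots,0)$ on the sphere, the vector $\vec z - e_1$ approaches the ray $\R_+e_2$ on which $\Psi$ vanishes, so $\Psi(\vec z - e_1) = o(t^p)$.

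The mechanism you are missing is that $\Psi(\vec z)$, unlike $\Psi(\vec z - e_j)$, vanishes only to \emph{first} order transversally to the ray $\R_+ e_j$. On the sphere $\sum_i z_i^p=1$ near $e_1$, Bernoulli's inequality gives
\[
\Big(\sum_i z_i\Big)^p \ \geq\ z_1^p + p\,z_1^{p-1}\sum_{i\geq 2} z_i \ =\ 1 - \sum_{i\geq 2} z_i^p + p\big(1+O(|\vec z - e_1|)\big)\sum_{i\geq 2} z_i,
\]
and since $\sum_{i\geq 2} z_i^p \ll \sum_{i\geq 2} z_i$ for small $z_i$ (here $p>1$), while $\sum_{i\geq 2} z_i \geq \tfrac12 |\vec z - e_1|_{\ell_1}$ (because $|z_1-1| = 1-(1-\sum_{i\geq2}z_i^p)^{1/p} = O(\sum_{i\geq 2} z_i^p)$ is of higher order), one obtains $\Psi(\vec z) = (\sum_i z_i)^p - 1 \geq \tfrac{p}{4}|\vec z - e_1|_{\ell_1}$. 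The large coordinate $z_1\approx 1$ is precisely what makes the cross term linear in the transverse variables; subtracting $e_1$ destroys it, which is why your expansion around $(0,t_2,\dots,t_m)$ only produced quantities of order $s^p$. As it stands, the proposal does not prove the statement the lemma asserts and the paper needs.
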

\begin{proof}
The non-negativity of~$\Psi$ follows from H\"older's inequality (the~$\ell_1$-norm is larger than the~$\ell_p$-norm; the equality is possible only if the~$\ell_p$ norms coincide for all~$p$, which is exactly the case~$\vec{z} = \lambda e_j$). 

Let us prove the second assertion of the lemma. Assume that~$\sum_{j=1}^m z_j^p=1$ and~$\vec{z}$ lies in a sufficiently small neighborhood of~$e_1$. Note that
\eq{
\sum_{j=2}^mz_j =  |\vec{z} - e_1|_{\ell_1} - |z_1 - 1|_{\ell_1} = |\vec{z} - e_1|_{\ell_1} - \Big(1-\big(1-\sum\limits_{j=2}^m z_j^p\big)^{\frac1p}\Big) \geq |\vec{z} - e_1|_{\ell_1} - O(\sum\limits_{j=2}^m z_j^p).
}
Since~$p > 1$, this proves
\eq{\label{OneHalf}
\sum_{j=2}^mz_j \geq \frac12|\vec{z} - e_1|_{\ell_1},
}
provided the latter quantity is sufficiently small. 

Now we are ready to prove the second assertion of the lemma. We use Bernoulli's inequality:
\eq{
\big(\sum\limits_{j=1}^m z_j\big)^p \geq z_1^p + pz_1^{p-1}\sum\limits_{j=2}^mz_j = 1-\sum\limits_{j=2}^m z_j^p + p(1 + O(|\vec{z} - e_1|_{\ell_1})\sum\limits_{j=2}^mz_j.
}
Since~$p > 1$, the first sum is dominated by the last term when~$\vec{z}$ is sufficiently close to~$e_1$. So, by~\eqref{OneHalf}, we obtain
\eq{
\big(\sum\limits_{j=1}^m z_j\big)^p \geq 1 + \frac{p}{4}|\vec{z} - e_1|_{\ell_1},
}
provided~$\vec{z}$ is sufficiently close to~$e_1$. The inequality~\eqref{OriginalAssert} follows from this one since any two norms on a finite dimensional space are equivalent.
\end{proof}

The function~$\min(|y|^{p-1}z, |y|z^{p-1})$ plays an important role in Theorem~\ref{Supersolution}, so it deserves a name:
\eq{
\Min\colon \R^\ell\times \R_+\to \R,\qquad \Min(y,z) = \min(|y|^{p-1}z, |y|z^{p-1}).
}
The function~$\Min$ is positively~$p$-homogeneous. 
\begin{Le}\label{MinLip}
The function~$\Min$ is locally Lipschitz if~$p \in (1,2]$.
\end{Le}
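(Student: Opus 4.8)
The plan is to reduce the assertion to a two–variable statement and then to a branchwise local analysis. First I would note that $\Min(y,z)=g(|y|,z)$, where $g\colon\R_+\times\R_+\to\R$ is defined by $g(r,z)=\min(r^{p-1}z,\,rz^{p-1})$, with $g(0,z)=0$ since $p>1$. The map $(y,z)\mapsto(|y|,z)$ is $1$-Lipschitz on $\R^\ell\times\R_+$, and a locally Lipschitz function precomposed with a (locally) Lipschitz map is again locally Lipschitz: for $x$ near $x_0$ the point $h(x)$ stays, by continuity, in a neighbourhood of $h(x_0)$ on which $g$ is Lipschitz with some constant $L$, and then $|g(h(x))-g(h(x'))|\le L M|x-x'|$, $M$ being the Lipschitz constant of $h$. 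Hence it suffices to prove that $g$ is locally Lipschitz on $\R_+\times\R_+$.

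Next I would determine which expression under the minimum is the smaller one. For $r,z>0$ one has $r^{p-1}z\le rz^{p-1}$ if and only if $r^{p-2}\le z^{p-2}$, and since $p-2\le 0$ this is equivalent to $r\ge z$. Therefore
\[
g(r,z)=\begin{cases} r^{p-1}z, & r\ge z,\\ rz^{p-1}, & r\le z,\end{cases}
\]
the two formulas agreeing on the diagonal $r=z$.

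Then I would verify local Lipschitzness pointwise. If $(y_0,z_0)\in\Omega$-type domain has $y_0\neq 0$, then on a small ball about $(y_0,z_0)$ both $|y|$ and $z$ lie in a fixed compact subinterval of $(0,\infty)$; there the functions $(r,z)\mapsto r^{p-1}z$ and $(r,z)\mapsto rz^{p-1}$ are smooth with bounded gradients, the map $y\mapsto|y|$ is smooth away from the origin, and the minimum of two Lipschitz functions is Lipschitz, so $\Min$ is Lipschitz on that ball. If $y_0=0$, I would instead use the ball $\{|y|<z_0/2\}\times\{|z-z_0|<z_0/2\}$: on it $|y|<z_0/2<z$, so the active branch is $\Min(y,z)=|y|\,z^{p-1}$, the product of the $1$-Lipschitz function $|y|$ with the function $z^{p-1}$, which is $C^1$ with bounded derivative for $z$ in the relevant range; hence $\Min$ is Lipschitz there as well. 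Combining the two cases gives the lemma.

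The only subtle point — and the place where the hypothesis $p\le 2$ is used — is the behaviour near $y=0$: for $p<2$ the power $r\mapsto r^{p-1}$ is not Lipschitz at $r=0$, so the branch $|y|^{p-1}z$ is, by itself, \emph{not} Lipschitz in any neighbourhood of $\{y=0\}$. What rescues the argument is exactly the computation of the second step: when $p\le 2$ that branch is the \emph{larger} of the two whenever $r<z$, hence it is never selected by the minimum on a neighbourhood of $\{y=0,\ z>0\}$, and only the harmless branch $|y|z^{p-1}$ remains. I expect this reconciliation of the minimum structure with the position of the non-Lipschitz branch to be the main (and essentially the only) conceptual step; everything else is a routine compactness-plus-smoothness argument.
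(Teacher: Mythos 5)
Your reduction and branch analysis follow the same route as the paper, which splits a bounded set $K$ into $K\cap\{|y|\ge z\}$ and $K\cap\{z\ge |y|\}$ and observes that $\Min$ is given by a single smooth formula on each piece; your observation that for $p<2$ the branch $|y|^{p-1}z$ is never the one selected near $\{y=0,\,z>0\}$ is exactly the point of that splitting. So the approach is correct and matches the paper's.

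There is, however, a hole in your pointwise case analysis: the points with $z_0=0$ are not covered. In your first case the assertion that ``both $|y|$ and $z$ lie in a fixed compact subinterval of $(0,\infty)$'' fails when $z_0=0$, and there the \emph{other} branch $|y|z^{p-1}$ has $z$-derivative $(p-1)|y|z^{p-2}$, which is unbounded as $z\to 0^{+}$ for $p<2$, so ``smooth with bounded gradients'' is false for that branch; your second case needs $z_0>0$ for the ball $\{|y|<z_0/2\}$ to be nonempty, so the origin is also left out. These points lie in the domain $\R^\ell\times\R_+$ and are precisely where the lemma gets used later (the proof of Lemma~\ref{MinLemma} invokes Lipschitzness on $\{\sum_j z_j^p=1\}$ near $e_j$, where most coordinates $z_j$ vanish). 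The repair is the mirror image of the step you already carried out at $y_0=0$: near $(y_0,0)$ with $y_0\ne 0$ one has $z<|y|$, so only the branch $|y|^{p-1}z$ is active, and it is smooth with bounded gradient there; for the origin, note that on $\{z\le|y|\}\cap B(0,1)$ the gradient of $|y|^{p-1}z$ is bounded (since $z\le|y|$ gives $(p-1)|y|^{p-2}z\le(p-1)|y|^{p-1}\le p-1$) and on $\{|y|\le z\}\cap B(0,1)$ the gradient of $|y|z^{p-1}$ is likewise bounded, after which continuity of $\Min$ and a segment-splitting argument (or positive $p$-homogeneity with $p>1$ together with Lipschitzness on an annulus) finishes the job. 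With that addition your argument is complete and coincides with the paper's.
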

Of course, the assertion of lemma remains true when~$p \geq 2$.
\begin{proof}
Let~$K$ be a bounded region in~$\R\times \R_+$. It suffices to prove that~$\Min$ is Lipschitz on~$K \cap \{|y| \geq z\}$ and on~$K \cap \{z \geq |y|\}$, because~$\Min$ is continuous on~$K$. It remains to notice that on each of the sets~$\{|y| \geq z\}$ and~$\{z \geq |y|\}$ the function~$\Min$ is~$C^1$-smooth.
\end{proof}
We may also write the representation
\eq{\label{MinTheta}
\Min(y,z) = |y|^{p}\theta\Big(\frac{z}{|y|}\Big) = z^{p}\theta\Big(\frac{|y|}{z}\Big),
}
where~$\theta\colon \R\to\R$ is given by formula
\eq{
\theta(t) = \min(|t|,|t|^{p-1}).
}
If~$p \leq 2$, then the function~$\theta$ is Lipschitz. Its restriction to the positive semi-axis is concave.
\begin{Le}\label{EstimateForTheta}
Let~$p \in (1,2]$. For any~$a,b\in\R$\textup, the inequality
\eq{
|\theta(a+b) -\theta(a)|\lesssim \theta(b)
}
holds true.
\end{Le}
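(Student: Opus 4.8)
The plan is to deduce the estimate from subadditivity of $\theta$. Recall that $\theta(t)=\min(|t|,|t|^{p-1})$; since $p-1\in(0,1]$ we have $|t|\le|t|^{p-1}$ for $|t|\le1$ and $|t|^{p-1}\le|t|$ for $|t|\ge1$, so $\theta(t)=|t|$ on $[-1,1]$ and $\theta(t)=|t|^{p-1}$ outside. In particular $\theta$ is even, nondecreasing on $[0,\infty)$, vanishes only at the origin, and — as already recorded in the excerpt — its restriction to $[0,\infty)$ is concave, being the minimum of the two concave functions $t\mapsto t$ and $t\mapsto t^{p-1}$.

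The first step is to prove the auxiliary claim that $\theta$ is subadditive on $\R$, i.e. $\theta(s+t)\le\theta(s)+\theta(t)$ for all $s,t\in\R$. For $s,t\ge0$ this is the standard fact that a concave function on $[0,\infty)$ vanishing at $0$ is subadditive: if $s+t>0$, concavity applied to $s=\tfrac{s}{s+t}(s+t)+\tfrac{t}{s+t}\cdot 0$ gives $\theta(s)\ge\tfrac{s}{s+t}\theta(s+t)$, and likewise $\theta(t)\ge\tfrac{t}{s+t}\theta(s+t)$; adding yields the claim (the case $s+t=0$ is trivial). For $s\ge0>t$ write $t=-c$ with $c>0$; then $|s+t|=|s-c|\le\max(s,c)$, so by monotonicity and nonnegativity of $\theta$ on $[0,\infty)$ we get $\theta(s+t)\le\max(\theta(s),\theta(c))\le\theta(s)+\theta(c)=\theta(s)+\theta(t)$. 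The remaining sign patterns follow by symmetry using $\theta(-x)=\theta(x)$ (when $s,t\le0$ one has $|s+t|=|s|+|t|$ and reduces to the first case). Hence $\theta$ is subadditive.

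With subadditivity in hand the lemma is immediate, and in fact holds with implicit constant $1$. Applying subadditivity to the pair $(a,b)$ gives $\theta(a+b)\le\theta(a)+\theta(b)$, hence $\theta(a+b)-\theta(a)\le\theta(b)$; applying it to the pair $(a+b,\,-b)$ gives $\theta(a)=\theta\big((a+b)+(-b)\big)\le\theta(a+b)+\theta(-b)=\theta(a+b)+\theta(b)$, hence $\theta(a)-\theta(a+b)\le\theta(b)$. Combining the two one-sided bounds yields $|\theta(a+b)-\theta(a)|\le\theta(b)$, which is the desired inequality.

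There is no genuine obstacle here: the only substantive point is recognizing that $\theta$ is subadditive on all of $\R$, which reduces to the concavity of $\theta|_{[0,\infty)}$ already noted, together with evenness and monotonicity; the bound on $|\theta(a+b)-\theta(a)|$ is then just a two-line triangle-inequality argument.
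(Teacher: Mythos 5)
Your proof is correct, and it takes a genuinely different route from the paper's. The paper argues by a case analysis on the sizes of $|a|$, $|b|$, and $|a+b|$: for $|b|\le 1$ it uses that $\theta$ is $1$-Lipschitz, and for $|b|\ge 1$ it reduces to the inequality $\bigl||a+b|^{p-1}-|a|^{p-1}\bigr|\lesssim |b|^{p-1}$, splitting further according to whether $|a|\le 2|b|$ or not and invoking the differentiability of $(1+t)^{p-1}$ at zero in the latter case. You instead observe that $\theta|_{[0,\infty)}$ is concave (as a minimum of two concave functions), vanishes at the origin, and is nondecreasing, hence subadditive on $[0,\infty)$; the extension of subadditivity to mixed signs via $|s-c|\le\max(s,c)$ and evenness is carried out correctly, and the two applications of subadditivity to $(a,b)$ and $(a+b,-b)$ then give the two-sided bound. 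Your argument is shorter, avoids all case distinctions on the relative sizes of $a$ and $b$, and yields the sharp implicit constant $1$ rather than an unspecified constant; the paper's more hands-on computation, on the other hand, does not need the subadditivity lemma and generalizes more directly to situations where concavity is unavailable. Both are valid; yours is arguably the cleaner proof of this particular lemma.
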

\begin{proof}
We will consider several cases. If~$|b| \leq 1$, we use that~$\theta$ is a~$1$-Lipschitz function:
\eq{
|\theta(a+b) - \theta(a)|\leq |b| = \theta(b).
}
So, we assume~$|b| \geq 1$ in what follows. We wish to prove the inequality
\eq{
|\theta(a+b) - \theta(a)| \lesssim |b|^{p-1},\quad |b| \geq 1.
}
If either~$|a+b|$ or~$|a|$ is smaller than~$1$, then there is nothing to prove, because~$\theta(a+b) + \theta(a) \lesssim |b|^{p-1}$ in this case. Thus, we may also assume~$|a| \geq 1$,~$|a+b| \geq 1$ and try to prove the inequality
\eq{
||a+b|^{p-1} - |a|^{p-1}|\lesssim b^{p-1}
}
in this regime. If~$|a| \leq 2|b|$, then we may use the estimate
\eq{
||a+b|^{p-1} - |a|^{p-1}| \leq (3^{p-1}+2^{p-1})|b|^{p-1}.
}
In the other case, we use the differentiability of~$(1+t)^{p-1}$ at zero and write
\eq{
||a+b|^{p-1} - |a|^{p-1}| \lesssim |a|^{p-1}\frac{|b|}{|a|} \lesssim |b|^{p-1}
}
since~$p \leq 2$ and~$|b| \leq \frac{|a|}{2}$.
\end{proof}
The following lemma is simple, so we omit the proof.
\begin{Le}\label{EpsilonLemma}
Let~$p \geq 1$. For any~$\eps > 0$ there exists a constant~$C_\eps$ such that the inequality
\eq{
|a+b|^{p-1} \leq (1+\eps)|a|^{p-1} + C_\eps|b|^{p-1}
}
holds true for any~$a,b \in \R$.
\end{Le}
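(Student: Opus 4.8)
The plan is to reduce this two-variable inequality to a one-variable boundedness statement using homogeneity. Set $q = p - 1 \geq 0$. The inequality is trivial if $b = 0$, so assume $b \neq 0$. Since $|a + b| \leq |a| + |b|$ and $t \mapsto t^q$ is nondecreasing on $[0, \infty)$, we have $|a+b|^q \leq (|a| + |b|)^q = |b|^q\,(1 + t)^q$ with $t := |a|/|b| \geq 0$. Hence it suffices to produce, for each $\eps > 0$, a finite constant $C_\eps$ with
\[
(1 + t)^q \leq (1 + \eps)\,t^q + C_\eps \qquad \text{for all } t \geq 0,
\]
because multiplying this by $|b|^q$ gives exactly $|a+b|^q \leq (1+\eps)|a|^q + C_\eps|b|^q$.

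To obtain the one-variable inequality I would examine the function $\varphi(t) = (1+t)^q - (1+\eps) t^q$ on $[0, \infty)$ and show it is bounded above; then $C_\eps := \sup_{t \geq 0} \varphi(t)$ works. The function $\varphi$ is continuous on $[0, \infty)$, so only its behaviour as $t \to \infty$ needs attention. If $q = 0$ then $\varphi \equiv -\eps$. If $q > 0$, write $\varphi(t) = t^q\big((1 + 1/t)^q - (1 + \eps)\big)$; since $(1 + 1/t)^q \to 1$ as $t \to \infty$, the bracketed factor is eventually at most $-\eps/2$, so $\varphi(t) \to -\infty$. In either case $\varphi$ is bounded above on $[0, \infty)$, which is all we need. (This same argument incidentally covers the case $p = 1$, where $|a+b|^{p-1} \equiv 1$, via $q = 0$.)

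There is no genuine obstacle here; the computation is elementary. The only point worth flagging is that $C_\eps$ is necessarily allowed to depend on $\eps$ and blows up as $\eps \to 0^+$ — the threshold beyond which $(1 + 1/t)^q \leq 1 + \eps$ recedes to $+\infty$ — which is precisely why the statement is phrased with an $\eps$-dependent constant rather than an absolute one. Alternatively, for the sub-range $1 \leq p \leq 2$ one can bypass the limiting argument entirely: there $q \in [0,1]$, so $t \mapsto t^q$ is subadditive and $|a+b|^q \leq (|a|+|b|)^q \leq |a|^q + |b|^q$, giving the conclusion with $C_\eps = 1$; only $p > 2$ truly requires the boundedness-of-$\varphi$ argument above.
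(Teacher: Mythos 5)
Your proof is correct. The paper itself gives no argument here (the lemma is stated with ``the proof is omitted since it is simple''), so there is nothing to compare against: your reduction by the triangle inequality and homogeneity to the boundedness above of $\varphi(t)=(1+t)^{p-1}-(1+\eps)t^{p-1}$ on $[0,\infty)$ is the standard way to fill this gap, and the subadditivity shortcut for $1\le p\le 2$ is also fine --- note the lemma is only invoked in the paper for $p\ge 2$, where your limiting argument is the relevant one.
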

\section{Verification of the main inequality}\label{S5}
Throughout this section let~$y, x, z$ and~$\{x_j\}_{j=1}^m$,~$\{z_j\}_{j=1}^m$ be the same as in the main inequality~\eqref{MainInequality}. We treat the main inequality for separate summands in formulas~\eqref{SS1} and~\eqref{SS2} individually. With the function~$z^p$, this is simple:
\eq{\label{z^p}
z^p - \frac{1}{m^p}\sum\limits_{j=1}^mz_j^p = m^{-p}\Psi(\vec{z}),
}
where the function~$\Psi$ is defined in~\eqref{Psi}, and we use our standard notation~$\vec{z} = (z_1,z_2,\ldots,z_m)$. The function~$\Min$ requires a more serious study.
\begin{Le}\label{MinLemma}
Let~$p \in (1,2]$ and let~$\Lin$ be weakly canceling in the sense that~\eqref{WC} holds true. There exists a constant~$c > 0$ such that
\eq{
\Min(y,z) - \frac{1}{m^p}\sum\limits_{j=1}^m \Min(m^\alpha y+(\Lin[\vec x])_j, z_j) \geq c\Min(y,z) + O(\Psi(\vec{z})).
}
\end{Le}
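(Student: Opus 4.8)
The plan is as follows. Write $L$ for the left-hand side of the asserted inequality, regarded as a function of $\{x_j\}$, $\{z_j\}$ and $y$ subject to $|x_j|\le z_j$; I want to produce $c>0$ and $C\ge0$ with $L\ge c\,\Min(y,z)-C\,\Psi(\vec z)$ always. Since $\Min$ and $\Psi$ are positively $p$-homogeneous and $\vec x\mapsto(\Lin[\vec x])_j$ is linear, $L$ and the target inequality are positively $p$-homogeneous jointly in $(y,\{x_j\},\{z_j\})$, so after rescaling it is enough to treat the normalized case $\sum_j z_j=m$ (i.e.\ $z=1$); the case $z=0$ is trivial, since then $\vec z=\vec0$, $\vec x=\vec0$ and every term vanishes, and under the normalization $\vec x$ ranges over a fixed compact set. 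I split into three regions. \emph{(i)} For $|y|\ge R$ with $R$ large one has $|m^\alpha y+(\Lin[\vec x])_j|\ge z_j$ for every $j$ (since $(\Lin[\vec x])_j$ is bounded while $z_j\le m$), hence $\Min(m^\alpha y+(\Lin[\vec x])_j,z_j)=|m^\alpha y+(\Lin[\vec x])_j|^{p-1}z_j\le(1+\eps)(m^\alpha|y|)^{p-1}z_j$; summing, dividing by $m^p$ and using $\sum_j z_j=m$ together with the identity $\alpha(p-1)+1-p=-\alpha$, one gets $\frac1{m^p}\sum_j\Min(\ldots)\le(1+\eps)m^{-\alpha}|y|^{p-1}$, while $\Min(y,1)=|y|^{p-1}$, so choosing $\eps$ with $(1+\eps)m^{-\alpha}<1$ gives $L\ge(1-(1+\eps)m^{-\alpha})\Min(y,1)$, which is what is needed (with no $\Psi$-term). \emph{(ii)} On the compact set $\{|y|\le R,\ \Psi(\vec z)\ge\delta\}$ the continuous function $L-c\,\Min(y,1)$ is bounded below, so $L-c\,\Min(y,1)+C\,\Psi(\vec z)\ge0$ once $C$ is large depending on $\delta$.

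The substantive region is \emph{(iii)} $\{|y|\le R,\ \Psi(\vec z)<\delta\}$. By Lemma~\ref{PsiLemma} and compactness, for $\delta$ small $\vec z$ must lie within $\eta(\delta)\to0$ of $m\,e_{j_0}$ for a unique index $j_0$, so $z_{j_0}\ge m/2$ and $S:=\sum_{i\ne j_0}z_i\le\eta(\delta)$. Here weak cancellation is decisive: since $|x_i|\le z_i$ for $i\ne j_0$, the vector $\vec x$ differs from $x\,D_{j_0}$ by a vector of norm $O(S)$, so $(\Lin[\vec x])_{j_0}=x\,(\Lin[D_{j_0}])_{j_0}+O(S)=O(S)$ by~\eqref{WC}. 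Using that $\Min$ is Lipschitz on bounded sets (Lemma~\ref{MinLip}; this is where $p\le2$ enters) I would replace $m^\alpha y+(\Lin[\vec x])_{j_0}$ by $m^\alpha y$ and $z_{j_0}=m-S$ by $m$, each at cost $O(S)$, so the $j_0$-th summand equals $\Min(m^\alpha y,m)+O(S)=m^p\,\Min(m^{-1/p}y,1)+O(S)$; for $i\ne j_0$ the crude bound $\Min(m^\alpha y+(\Lin[\vec x])_i,z_i)\le|m^\alpha y+(\Lin[\vec x])_i|^{p-1}z_i\lesssim z_i$ contributes $O(S)$ in total. The Bernoulli estimate from the proof of Lemma~\ref{PsiLemma} gives $\Psi(\vec z)\gtrsim z_{j_0}^{p-1}S\gtrsim S$, so each $O(S)$ above is an $O(\Psi(\vec z))$, and altogether $\frac1{m^p}\sum_j\Min(\ldots)=\Min(m^{-1/p}y,1)+O(\Psi(\vec z))$. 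Combining this with the elementary pointwise inequality $\Min(m^{-1/p}y,z)\le m^{-\alpha}\Min(y,z)$ — a short case check that again uses $p\le2$ — gives $L\ge(1-m^{-\alpha})\Min(y,1)-O(\Psi(\vec z))$, i.e.\ the claim with $c=1-m^{-\alpha}>0$ and $C$ absorbing the implied constant.

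Finally one picks $c$ at most the minimum of the three positive constants obtained and $C$ large enough to handle regions (ii) and (iii) simultaneously. The main obstacle, and the part needing care, is region (iii): one must ensure that \emph{all} the error terms are genuinely $O(\Psi(\vec z))$ and not merely small. This works because two facts cooperate — weak cancellation reduces the one dangerous summand, the $j_0$-th one (whose $z_{j_0}$ stays bounded away from $0$), to an error of size $O(S)$, while the sharp lower bound $\Psi(\vec z)\gtrsim z_{j_0}^{p-1}S$ dominates all the remaining contributions, which come from the small coordinates $z_i$ with $i\ne j_0$, by $\Psi(\vec z)$.
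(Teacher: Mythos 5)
Your proof is correct, but it is organized quite differently from the paper's. The paper first removes the perturbation $(\Lin[\vec x])_j$ uniformly in $y$ using the quasi-subadditivity $|\theta(a+b)-\theta(a)|\lesssim\theta(b)$ of Lemma~\ref{EstimateForTheta}, which splits the left-hand side into a main term $I_1=\Min(y,z)-m^{-p}\sum_j\Min(m^\alpha y,z_j)$ and an error $I_2=\sum_j\Min((\Lin[\vec x])_j,z_j)$; the term $I_1$ is bounded below by $c\,\Min(y,z)$ via the concavity of $\theta$ on $\R_+$ together with $\theta(\lambda t)\le\lambda^{p-1}\theta(t)$, and $I_2\lesssim\Psi(\vec z)$ is obtained by normalizing $\sum_j z_j^p=1$ and observing that the left side is a Lipschitz, positively $p$-homogeneous function vanishing at $\vec z=e_j$ (by weak cancellation), so Lemma~\ref{PsiLemma} applies. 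You instead normalize $z=1$ and run a three-region compactness argument: a direct computation for $|y|$ large, compactness for $\Psi(\vec z)\ge\delta$, and, in the critical region, an explicit linearization $\vec x=xD_{j_0}+O(S)$ near the degenerate configurations combined with the quantitative bound $\Psi(\vec z)\gtrsim z_{j_0}^{p-1}S$ extracted from the proof of Lemma~\ref{PsiLemma}, plus the elementary inequality $\Min(m^{-1/p}y,z)\le m^{-\alpha}\Min(y,z)$ in place of the concavity of $\theta$. Both arguments use $p\le 2$ and the weak cancellation \eqref{WC} in essentially the same places, but the paper's route gives explicit constants throughout and never needs compactness, whereas yours dispenses with Lemma~\ref{EstimateForTheta} entirely at the price of a non-constructive constant $C$ coming from region (ii). All the individual steps you sketch (the identity $\alpha(p-1)+1-p=-\alpha$, the bound $(\Lin[\vec x])_{j_0}=O(S)$, the comparison $\Psi(\vec z)\gtrsim S$, and the final case check using $\lambda\le\lambda^{p-1}$ for $\lambda\le1$, $p\le2$) check out.
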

\begin{proof}
We derive from~\eqref{MinTheta} and Lemma~\ref{EstimateForTheta} that
\eq{
\Big|\Min(m^\alpha y+(\Lin[\vec x])_j, z_j) - \Min(m^\alpha y,z_j)\Big| \lesssim \Min((\Lin[\vec x])_j, z_j)
}
for any~$j$. Thus,
\mlt{
\Min(y,z) - \frac{1}{m^p}\sum\limits_{j=1}^m \Min(m^\alpha y+(\Lin[\vec x])_j, z_j) \geq\\ \underbrace{\Min(y,z) - \frac{1}{m^p}\sum\limits_{j=1}^m \Min(m^\alpha y, z_j)}_{I_1} + O\Big(\underbrace{\sum\limits_{j=1}^m \Min((\Lin[\vec x])_j, z_j)}_{I_2}\Big).
}
Let us estimate~$I_1$ and~$I_2$ separately. We start with~$I_1$:
\mlt{
I_1 = \Min(y,z) - m^{-p+\alpha p}\sum\limits_{j=1}^m\Min(y,m^{-\alpha} z_j) = |y|^p\Big[\theta\Big(\frac{z}{|y|}\Big) - m^{-1}\sum\limits_{j=1}^m\theta\Big(\frac{z_j}{m^{\alpha}|y|}\Big)\Big]  \Gref{\scriptscriptstyle \theta\, \text{\tiny is concave}}\\
|y|^p\Big[\theta\Big(\frac{z}{|y|}\Big) - \theta\Big(\frac{z}{m^\alpha|y|}\Big)\Big] \geq c\Min(|y|,z),
}
since~$\theta(\lambda t) \leq \lambda^{p-1}\theta(t)$ for~$\lambda < 1$.

We claim the following estimate for~$I_2$:
\eq{\label{I2estimate}
\sum\limits_{j=1}^m \Min((\Lin[\vec x])_j, z_j) \lesssim \Psi(\vec{z}).
}
Both the left hand and the right hand sides are positively~$p$-homogeneous, so, we may assume~$\sum_{j=1}^mz_j^p = 1$ without loss of generality. By Lemma~\ref{PsiLemma}, it suffices to show that the expression on the left hand side is a Lipschitz function (under our assumption about the~$z_j$) that vanishes whenever~$\vec{z} = e_j$ for some~$j \in [1\,..\, m]$. The first assertion follows from Lemma~\ref{MinLip}. To prove the second assertion, we note that since~$(x_j,y,z_j)\in \Omega$ for every~$j$, the equality~$\vec{z} = e_j$ implies~$\vec{x} = \lambda D_j$ (the~$D_j$ are defined in~\eqref{DeltaVectors}) for some~$\lambda \leq \frac{1}{m}$. In this case, the left hand side vanishes since~$\Lin$ is weakly canceling.
\end{proof}
\begin{Ex}[Continuation of Example~\ref{Example2}]
In this case\textup,~\eqref{I2estimate} is reduced to a simple consequence of the triangle inequality
\eq{
|x_3 - x_2|z_1 + |x_1- x_3|z_2 + |x_2 - x_1|z_3 \lesssim z_1z_2 + z_2z_3 + z_1z_3.
}
\end{Ex}

\begin{Le}\label{SumLemma}
Let~$p \in [2,\infty)$ and let~$\Lin$ be canceling. There exists a constant~$c > 0$ such that
\alg{
\label{p-11}|y|^{p-1}z - \frac{1}{m^p}\sum\limits_{j=1}^m \Big|m^\alpha y+(\Lin[\vec x])_j\Big|^{p-1} z_j \geq c|y|^{p-1}z + O(\Psi(\vec{z}));\\
\label{1p-1}|y|z^{p-1} - \frac{1}{m^p}\sum\limits_{j=1}^m \Big|m^\alpha y+(\Lin[\vec x])_j\Big| z_j^{p-1} \geq c|y|z^{p-1} + O(\Psi(\vec{z})).
}
\end{Le}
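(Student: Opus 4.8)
The plan is to prove \eqref{p-11} and \eqref{1p-1} by the same device: use the triangle inequality to replace $m^\alpha y+(\Lin[\vec x])_j$ by $m^\alpha y$ at the price of an error, check that the surviving ``main'' term carries a coefficient strictly smaller than the one on the left, and control the error by $\Psi(\vec z)$ exactly as estimate \eqref{I2estimate} was controlled in Lemma~\ref{MinLemma}. Recall $\alpha=\tfrac{p-1}{p}$.

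For \eqref{p-11} I would write, for each $j$, $|m^\alpha y+(\Lin[\vec x])_j|\leq m^\alpha|y|+|(\Lin[\vec x])_j|$ and apply Lemma~\ref{EpsilonLemma} with $a=m^\alpha|y|$ and $b=|(\Lin[\vec x])_j|$ to obtain $|m^\alpha y+(\Lin[\vec x])_j|^{p-1}\leq(1+\eps)m^{\alpha(p-1)}|y|^{p-1}+C_\eps|(\Lin[\vec x])_j|^{p-1}$. Multiplying by $z_j m^{-p}$, summing over $j$, and using $\sum_j z_j=mz$, the first term contributes $(1+\eps)m^{\alpha(p-1)+1-p}|y|^{p-1}z$. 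The key arithmetic identity $\alpha(p-1)+1-p=\tfrac{(p-1)^2}{p}+1-p=-\alpha$ turns this into $(1+\eps)m^{-\alpha}|y|^{p-1}z$, and since $m^{-\alpha}<1$, for $\eps$ small enough $(1+\eps)m^{-\alpha}=1-c$ with $c>0$, so $|y|^{p-1}z$ survives with coefficient $c$. For \eqref{1p-1} no sharpening is needed: from $|m^\alpha y+(\Lin[\vec x])_j|\leq m^\alpha|y|+|(\Lin[\vec x])_j|$, multiplication by $z_j^{p-1}m^{-p}$ and summation give $\tfrac{m^\alpha|y|}{m^p}\sum_j z_j^{p-1}+\tfrac1{m^p}\sum_j|(\Lin[\vec x])_j|z_j^{p-1}$, and $\sum_j z_j^{p-1}\leq\big(\sum_j z_j\big)^{p-1}=(mz)^{p-1}$ (valid because $p-1\geq1$) bounds the first summand by $m^{\alpha-1}|y|z^{p-1}=m^{-1/p}|y|z^{p-1}=(1-c)|y|z^{p-1}$ with $c=1-m^{-1/p}>0$.

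It then remains to prove $\sum_j|(\Lin[\vec x])_j|^{p-1}z_j\lesssim\Psi(\vec z)$ and $\sum_j|(\Lin[\vec x])_j|z_j^{p-1}\lesssim\Psi(\vec z)$, which I would do by the same argument used for \eqref{I2estimate}: both sides are positively $p$-homogeneous, so one normalizes $\sum_j z_j^p=1$; on this compact slice the left-hand sides are Lipschitz because $p\geq2$ makes $t\mapsto|t|^{p-1}$ and $z\mapsto z^{p-1}$ continuously differentiable while $\vec x\mapsto\Lin[\vec x]$ is linear; and they vanish at $\vec z=e_k$, since $|x_i|\leq z_i$ then forces $\vec x=\lambda D_k$ with $|\lambda|\leq\tfrac1m$ ($D_k$ as in \eqref{DeltaVectors}), so the only surviving term, the $k$-th, carries the factor $(\Lin[D_k])_k=0$ guaranteed by the weak cancellation condition \eqref{WC}. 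Lemma~\ref{PsiLemma} then delivers the bound by $\Psi(\vec z)$ on the slice, and homogeneity removes the normalization.

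I do not expect a serious obstacle: the whole content of the lemma is the arithmetic fact $\alpha(p-1)+1-p=-\alpha<0$ (which is precisely why $\alpha=\tfrac{p-1}{p}$ is the critical exponent) together with the necessity of the refined inequality of Lemma~\ref{EpsilonLemma} --- the crude estimate $(a+b)^{p-1}\lesssim a^{p-1}+b^{p-1}$ costs the constant $2^{p-2}$, and $2^{p-2}m^{-\alpha}$ may exceed $1$ for large $p$, so one genuinely needs $\eps\to0$. The one mildly delicate point is that the error expressions depend on $\vec x$ in addition to $\vec z$, whereas $\Psi$ depends on $\vec z$ alone; this is harmless because near $\vec z=e_k$ the constraint $|x_i|\leq z_i$ keeps $\vec x$ within $O(|\vec z-e_k|)$ of the line $\{\lambda D_k\}$ on which the errors vanish, so Lipschitz continuity on the slice still yields the comparison with $\Psi(\vec z)\gtrsim|\vec z-e_k|$.
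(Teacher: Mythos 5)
Your proof is correct and follows essentially the same route as the paper: Lemma~\ref{EpsilonLemma} plus the identity $\alpha(p-1)+1-p=-\alpha$ for \eqref{p-11}, the inequality $\sum_j z_j^{p-1}\leq\big(\sum_j z_j\big)^{p-1}$ for \eqref{1p-1}, and the homogeneity/Lipschitz/vanishing-at-$e_k$ argument of \eqref{I2estimate} for the error terms. The only differences are cosmetic (you bound the subtracted term from above where the paper bounds the retained term from below, using the same inequality), and you even make explicit the $\vec x$-dependence point that the paper leaves implicit.
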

\begin{proof}
The reasoning here is similar to the proof of Lemma~\ref{MinLemma}, however, some details differ. Let us prove~\eqref{p-11} first. We pick some tiny~$\eps$ and apply Lemma~\ref{EpsilonLemma}:
\mlt{
|y|^{p-1}z - \frac{1}{m^p}\sum\limits_{j=1}^m \Big|m^\alpha y+(\Lin[\vec x])_j\Big|^{p-1} z_j \geq\\ |y|^{p-1}z - (1+\eps)\frac{1}{m^p}\sum\limits_{j=1}^m m^{\alpha(p-1)} |y|^{p-1} z_j + O\Big(\sum\limits_{j=1}^m \Big|(\Lin[\vec x])_j\Big|^{p-1} z_j\Big),
}
where the constant in~$O$ depends on~$\eps$. Similar to the proof of Lemma~\ref{MinLemma}, the second summand is dominated by~$\Psi(\vec{z})$ (it is important that~$p \geq 2$ here since otherwise the function in question is not locally Lipschitz). The first summand equals
\eq{
\Big(1 - (1+\eps)m^{1-p + \alpha (p-1)}\Big)|y|^{p-1}z,
}
and the coefficient in the parenthesis is non-negative provided~$\eps$ is sufficiently small; recall~$\alpha \in (0,1)$.

Now let us prove~\eqref{1p-1}. We use the triangle inequality:
\mlt{
|y|z^{p-1} - \frac{1}{m^p}\sum\limits_{j=1}^m \Big|m^\alpha y+(\Lin[\vec x])_j\Big| z_j^{p-1} \geq |y|z^{p-1} -m^{\alpha - p}\sum\limits_{j=1}^m | y| z_j^{p-1} - \frac{1}{m^p}\sum\limits_{j=1}^m\big|(\Lin[\vec{x}])_j\big|z_j^{p-1}.
}
The last summand is dominated by~$\Psi(\vec{z})$. As for the first two summands, we use the inequality
\eq{
\Big(\sum\limits_{j=1}^m z_j \Big)^{p-1} \geq \sum\limits_{j=1}^m z_j^{p-1},\qquad p \geq 2,
}
and obtain:
\eq{
|y|z^{p-1} -m^{\alpha - p}\sum\limits_{j=1}^m | y| z_j^{p-1} \geq (m^{1-p} - m^{\alpha - p})|y|\sum\limits_{j=1}^m z_j^{p-1} \gtrsim |y|z^{p-1}
}
since~$\alpha < 1$.
\end{proof}

\begin{Le}\label{PhiLemma}
Let~$p\in [1,2]$. If~$\Phi$ is~$\Lin$-canceling in the sense that~\eqref{WCT} holds true\textup, then
\eq{\label{DifPhi}
\Big|\Phi(y) - \frac{1}{m^p}\sum\limits_{j=1}^m\Phi(m^\alpha y + (\Lin[\vec x])_j)\Big| \lesssim \Min(|y|,z) + \Psi(\vec{z}).
}
\end{Le}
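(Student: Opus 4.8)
The plan is to exploit the joint positive $p$-homogeneity of both sides of~\eqref{DifPhi} under $(y,\vec x,\vec z)\mapsto(\lambda y,\lambda\vec x,\lambda\vec z)$, $\lambda>0$, so as to reduce matters to a compactness argument, after first disposing of two extreme regimes directly. The backbone is the identity $\frac{1}{m^p}\sum_{j=1}^m\Phi(m^\alpha y)=\Phi(y)$, valid because $m^{\alpha p}=m^{p-1}$; it shows that the left-hand side of~\eqref{DifPhi} is governed entirely by the perturbations $(\Lin[\vec x])_j$, and these are controlled by $\vec z$: from $|x_j|\le z_j$ and $x=\tfrac1m\sum_jx_j$ one gets $|\vec x|\lesssim z$, hence $|(\Lin[\vec x])_j|\lesssim z$ for all $j$, the implicit constant depending only on $\Lin$. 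If $z=0$ then $\vec x=0$ and~\eqref{DifPhi} is the trivial $0\le 0$, so assume $z>0$ and split according to whether $|y|\ge Az$ or $|y|\le Az$, where $A$ is a large constant depending only on $m$ and $\Lin$.

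In the regime $|y|\ge Az$ the perturbations are dwarfed by $m^\alpha y$: $|(\Lin[\vec x])_j|\lesssim z\le\tfrac1A|y|\le\tfrac12 m^\alpha|y|$ once $A$ is large. Here I would use the elementary estimate
\[
|\Phi(u+w)-\Phi(u)|\lesssim(|u|+|w|)^{p-1}|w|,\qquad u,w\in\R^\ell,
\]
obtained by setting $R=|u|+|w|$, writing $\Phi(u+w)-\Phi(u)=R^p\bigl(\Phi(\tfrac{u+w}{R})-\Phi(\tfrac{u}{R})\bigr)$, and using that $\Phi$, being locally Lipschitz, is Lipschitz on the closed unit ball (which is compact and convex). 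Applying it with $u=m^\alpha y$, $w=(\Lin[\vec x])_j$, summing in $j$, and invoking the backbone identity, one gets that the left-hand side of~\eqref{DifPhi} is $\lesssim|y|^{p-1}z$; since $|y|\ge z$ and $p\le 2$ this equals $\Min(|y|,z)$, and this regime is settled.

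In the regime $|y|\le Az$ everything lives at scale $\lesssim z$. First, $|\Phi(y)|\lesssim|y|^p\lesssim\Min(|y|,z)$, checking $|y|\le z$ and $z\le|y|\le Az$ separately. Second, the displayed estimate with $u=(\Lin[\vec x])_j$, $w=m^\alpha y$ yields $\Phi(m^\alpha y+(\Lin[\vec x])_j)=\Phi((\Lin[\vec x])_j)+O\bigl((z+|y|)^{p-1}|y|\bigr)=\Phi((\Lin[\vec x])_j)+O(\Min(|y|,z))$, again using $p\le 2$ and $|y|\lesssim z$ to absorb $z^{p-1}|y|$ into $\Min(|y|,z)$. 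Feeding both facts into the triangle inequality, the whole problem collapses to the $y$-free inequality
\[
\Bigl|\sum_{j=1}^m\Phi\bigl((\Lin[\vec x])_j\bigr)\Bigr|\lesssim\Psi(\vec z),
\]
which I would prove exactly as the estimate~\eqref{I2estimate} for $I_2$ was proved inside Lemma~\ref{MinLemma}. Both sides are positively $p$-homogeneous in $(\vec x,\vec z)$, so normalize $\sum_jz_j^p=1$; on the resulting compact parameter set the left-hand side is Lipschitz (local Lipschitz continuity of $\Phi$), while by Lemma~\ref{PsiLemma} the function $\Psi(\vec z)$ is bounded below by a positive constant away from the points $e_k$ and satisfies $\Psi(\vec z)\gtrsim|\vec z-e_k|$ near each $e_k$. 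Hence it suffices to check that $\sum_j\Phi((\Lin[\vec x])_j)$ vanishes whenever $\vec z=e_k$. But $\vec z=e_k$ forces $x_i=0$ for all $i\ne k$, so $\vec x=\tfrac{x_k}{m}D_k$; by positive homogeneity $\sum_j\Phi((\Lin[\vec x])_j)$ is then a nonnegative multiple of $\Phi\bigl(\sigma(\Lin[D_k])_k\bigr)+\sum_{i\ne k}\Phi\bigl(\sigma(\Lin[D_k])_i\bigr)$, where $\sigma$ is the sign of $x_k$; the sum over $i\ne k$ vanishes by the $\Lin$-canceling condition~\eqref{WCT}, and the remaining $k$-th term equals $\Phi(0)=0$ because $(\Lin[D_k])_k=0$ by the weak cancellation of $\Lin$, a property available wherever this lemma is used.

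I expect this last step — the exact vanishing of $\sum_j\Phi((\Lin[\vec x])_j)$ along the degenerate configurations $\vec z\parallel e_k$ — to be the crux, since it is the only point at which the cancellation hypotheses enter, and it has to be matched against the fact, from Lemma~\ref{PsiLemma}, that $\Psi$ vanishes only to first order at each $e_k$; this is why exactly the local Lipschitz regularity of $\Phi$, and nothing weaker, is what the argument can afford. A point deserving care is the sign $\sigma$: on the degenerate locus $x_k$ ranges over all of $[-z_k,z_k]$, so one genuinely needs $\sum_{i\ne k}\Phi\bigl(\pm(\Lin[D_k])_i\bigr)=0$ for both signs. The $+$ case is~\eqref{WCT}; the $-$ case holds automatically whenever $\Phi$ is even or odd (as in Example~\ref{Example2}, where~\eqref{WCT} in fact forces $\Phi$ to be odd), and in general should be read as the symmetrized form of the $\Lin$-canceling condition — it is, in any case, equally necessary for~\eqref{MainIneq} by the argument of Corollary~\ref{NecessityOfPhiCancellation} applied to the configuration $x_k=-z_k$.
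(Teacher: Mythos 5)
Your proof is correct and follows essentially the same route as the paper's: both arguments use the local Lipschitz bound to strip away the $y$-dependence (you split on $|y|\gtrless Az$ where the paper normalizes $|y|=1$ and splits on $|\vec x|\lessgtr 1$, a cosmetic difference) and both reduce the lemma to the key estimate $\big|\sum_j\Phi((\Lin[\vec x])_j)\big|\lesssim\Psi(\vec z)$, proved by homogeneity, compactness, and the vanishing of the left-hand side at $\vec z=e_k$ via Lemma~\ref{PsiLemma}. Your closing observation is a genuine catch rather than a flaw in your argument: on the degenerate locus one has $\vec x=\tfrac{x_k}{m}D_k$ with $x_k$ of either sign, and since $\Phi$ is only \emph{positively} homogeneous, the case $x_k<0$ requires $\sum_{i\ne k}\Phi(-(\Lin[D_k])_i)=0$ in addition to~\eqref{WCT}; the paper's proof writes ``$\vec x=\lambda D_j$ for some $\lambda\le\frac1m$'' and silently treats $\lambda$ as nonnegative, so your symmetrized reading of the $\Lin$-canceling condition (equally necessary for~\eqref{MainIneq}, as you note) is exactly the right repair, as is your use of $(\Lin[D_k])_k=0$ from weak cancellation to kill the $k$-th term.
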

\begin{proof}
Due to homogeneity, we may assume~$|y| = 1$. Consider two cases.

\paragraph{Case~$|\vec{x}| \leq 1$.} We use that~$\Phi$ is locally Lipschitz:
\eq{
\Big|\Phi(y) - \frac{1}{m^p}\sum\limits_{j=1}^m\Phi(m^\alpha y + (\Lin[\vec x])_j)\Big| \lesssim \sum\limits_{j=1}^m |(\Lin[\vec{x}])_j| \lesssim \min(1,z) \leq \Min(1,z).
}

\paragraph{Case~$|\vec{x}| \geq 1$.} In this case~$z \gtrsim 1$ and~$\Min(1,z)\gtrsim 1$. Therefore, the left hand side of~\eqref{DifPhi} is bounded by
\eq{
1+ \sum\limits_{j=1}^m |(\Lin[\vec{x}])_j|^{p-1} + \Big|\sum\limits_{j=1}^m \Phi\big((\Lin[\vec x])_j\big)\Big| \lesssim \Min(1,z) + \Big|\sum\limits_{j=1}^m \Phi\big((\Lin[\vec x])_j\big)\Big|.
}
It remains to prove the inequality
\eq{\label{SuperMain}
\Big|\sum\limits_{j=1}^m \Phi\big((\Lin[\vec x])_j\big)\Big| \lesssim \Psi(\vec z).
}
Due to homogeneity, we may assume~$\sum_{j=1}^m z_j^p = 1$. In view of Lemma~\ref{PsiLemma}, it suffices to show that the function on the left is Lipschitz and vanishes in the cases where~$\vec{z} = e_j$ for some~$j\in [1\,..\, m]$. The first assertion follows from our assumption that~$\Phi$ is locally Lipschitz. Let us verify the second assertion. If~$\vec{z} = e_j$, then, since~$(x_j,y,z_j) \in\Omega$,~$\vec{x} = \lambda D_j$ for some~$\lambda \leq \frac{1}{m}$, and the left-hand side vanishes by the condition that~$\Phi$ is~$\Lin$-canceling. 
\end{proof}
\begin{Rem}\label{PhiRem}
In the case~$p \in[2,\infty)$\textup, the estimate
\eq{\label{DifPhi2}
\Big|\Phi(y) - \frac{1}{m^p}\sum\limits_{j=1}^m\Phi(m^\alpha y + (\Lin[\vec x])_j)\Big| \lesssim |y|z^{p-1} + |y|^{p-1}z + \Psi(\vec{z})
}
holds true. The proof is completely similar.
\begin{Ex}[Continuation of Example~\ref{Example2}]
In this case\textup,~\eqref{SuperMain} is reduced to a 
\eq{
\Big|(x_3-x_2)|x_3 - x_2| + (x_1 - x_3)|x_1- x_3| + (x_2 - x_1)|x_2 - x_1|\Big| \lesssim z_1z_2 + z_2z_3 + z_1z_3.
}
One\footnote{I am grateful to Leonid Slavin for this proof.} may prove this in an elementary way\textup:
\mlt{
\Big|(x_3-x_2)|x_3 - x_2| + (x_1 - x_3)|x_1- x_3| + (x_2 - x_1)|x_2 - x_1|\Big| \leq\\ z_1\Big||x_1- x_3| - |x_2 - x_1|\Big| + z_2\Big||x_2- x_1| - |x_3 - x_2|\Big| + z_3 \Big||x_3- x_2| - |x_1 - x_3|\Big| \leq \\
z_1(z_2 + z_3) + z_2(z_1+z_3) + z_3(z_1+z_2) \leq 2\big(z_1z_2 + z_1z_3 + z_2z_3\big).
}
\end{Ex}

\end{Rem}
\begin{proof}[Proof of Theorem~\ref{Supersolution}] The case~$p \leq 2$ follows from~\eqref{z^p} and Lemmas~\ref{MinLemma}, \ref{PhiLemma}. The case~$p \geq 2$ follows from~\eqref{z^p}, Lemma~\ref{SumLemma}, and Remark~\ref{PhiRem}.
\end{proof}

\section{Conjecture and open question}\label{s6}
The martingale considerations hint us that Maz'ya's conjecture mentioned at the very beginning of the paper may hold in a more general form. Let now~$\Lin$ be a mapping of~$S^{d-1}$ to~$\R^\ell$ for some~$\ell \in \N$. Assume~$\Lin$ is sufficiently smooth, at least, H\"older continuous. Let this mapping be weakly cancelling in the sense that
\eq{\label{WCF}
\int\limits_{S^{d-1}} \Lin(\zeta)\,d\sigma(\zeta) = 0.
}
See~\cite{Stolyarov2019} and~\cite{Stolyarov2021} how this matches the martingale weak cancelation condition~\eqref{WC}. Let~$\alpha \in (0,d)$. Consider the operator
\eq{
\BB_\alpha[f] = \mathcal{F}^{-1}\Big(\frac{\Lin(\xi/|\xi|) \hat{f}(\xi)}{|\xi|^\alpha}\Big),\qquad f\in L_1(\R^d).
}
The hat symbol and~$\mathcal{F}$ denote the Fourier transform. Note that~$\BB_\alpha$ is an~$-\alpha$-homogeneous Fourier multiplier, so it is a convolutional operator whose kernel is homogeneous of order~$\alpha - d$. Define the function~$\tilde{\Lin}\colon S^{d-1} \to \R^\ell$ by the rule
\eq{
\frac{\tilde{\Lin}(x)}{|x|^{d-\alpha}} = \mathcal{F}^{-1}\Big(\frac{\Lin(\xi/|\xi|)}{|\xi|^\alpha}\Big)(x).
}
Let~$p$ satisfy the natural homogeneity relation
\eq{
\frac{p-1}{p} = \frac{\alpha}{d}
}
and let~$\Phi \colon \R^\ell \to \R$ be a positively~$p$-homogeneous locally Lipschitz function. 
\begin{Conj}
The inequality
\eq{
\Big|\int\limits_{\R^d} \Phi(\BB_\alpha[f](x))\,dx\Big| \lesssim \|f\|_{L_1}^p
}
holds true for all compactly supported functions~$f$ such that~$\int f = 0$\textup, with a uniform constant\textup, if and only if~$\Phi$ is~$\Lin$-canceling in the sense that
\eq{\label{PhiLin}
\int\limits_{S^{d-1}}\Phi(\tilde{\Lin}(x))\,d\sigma(x) = 0.
}
\end{Conj}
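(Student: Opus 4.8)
\emph{Necessity of the condition~\eqref{PhiLin}.} This should be routine, the continuous counterpart of Corollary~\ref{NecessityOfPhiCancellation}: one tests against a concentrating bump corrected to have zero mean. Fix nonnegative~$\eta,h \in C_0^\infty(\R^d)$ with~$\int\eta = \int h = 1$,~$\eta$ supported in the unit ball and~$h$ supported in~$\{2 \leq |x| \leq 3\}$, and put~$f_\eps = \eps^{-d}\eta(\cdot/\eps) - h$, so that~$\int f_\eps = 0$ and~$\|f_\eps\|_{L_1} = 2$ for~$\eps \in (0,1)$. The operator~$\BB_\alpha$ is convolution with the kernel~$k(x) = \tilde\Lin(x/|x|)|x|^{\alpha-d}$, and since~$\frac{p-1}{p} = \frac{\alpha}{d}$ is equivalent to~$p(d-\alpha) = d$, positive~$p$-homogeneity of~$\Phi$ gives, in polar coordinates,
\eq{
\int\limits_{\{\rho \leq |x| \leq 1\}}\Phi\big(k(x)\big)\,dx = \Big(\log\tfrac1\rho\Big)\int\limits_{S^{d-1}}\Phi(\tilde\Lin(\zeta))\,d\sigma(\zeta).
}
A routine estimate -- using local integrability of~$k$, the scaling identity~$\BB_\alpha[\eps^{-d}\eta(\cdot/\eps)](x) = \eps^{\alpha-d}(\BB_\alpha\eta)(x/\eps)$, and the decay~$\BB_\alpha[f_\eps](x) = O(|x|^{\alpha-d-1})$ at infinity granted by~$\int f_\eps = 0$ -- shows that the annulus~$\{2\eps \leq |x| \leq 1\}$ contributes~$\big(\log\tfrac1{2\eps}\big)\int_{S^{d-1}}\Phi(\tilde\Lin(\zeta))\,d\sigma(\zeta) + O(1)$ to~$\int_{\R^d}\Phi(\BB_\alpha[f_\eps])$, while the regions~$\{|x| \leq 2\eps\}$ and~$\{|x| \geq 1\}$ contribute~$O(1)$ uniformly in~$\eps$. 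Consequently the left-hand side of the conjectured inequality is unbounded as~$\eps \to 0$ whenever the spherical integral in~\eqref{PhiLin} is nonzero, whereas~$\|f_\eps\|_{L_1}^p = 2^p$; hence~\eqref{PhiLin} is necessary.

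\emph{Sufficiency: the scheme.} The plan is to transfer the Bellman-function argument of Sections~\ref{s3}--\ref{S5} to the continuum. First reduce to~$f$ smooth, compactly supported, with~$\int f = 0$, and decompose it along the dyadic grid by a smooth Littlewood--Paley or wavelet resolution,~$f = \sum_Q f_Q$, where~$f_Q$ is a molecule adapted to the dyadic cube~$Q$ with~$\int f_Q = 0$: the dyadic tree plays the role of the tree~$\AF_\bullet$, and~$\|f_Q\|_{L_1}$ the part of~$\|f\|_{L_1}$ ``seen'' at~$Q$, in analogy with the martingale differences~$f_{k+1}|_\omega$; see~\cite{Stolyarov2019, Stolyarov2021} for how~\eqref{WCF} matches the martingale condition~\eqref{WC}. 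Up to a rapidly decaying tail,~$\BB_\alpha[f_Q]$ is a bump essentially carried by~$Q$ of height~$\sim \ell(Q)^{\alpha-d}\|f_Q\|_{L_1}$; identifying~$\ell(Q) \sim m^{-k}$, this reproduces the weights~$m^{-\alpha(k+1)}$ of~\eqref{ModifiedRiesz}. One then introduces a Bellman candidate~$\Bell(x,y,z)$ tracking the running average of~$f$, the partial sum~$y$ of the pieces~$\BB_\alpha[f_{Q'}]$ already processed, and the remaining~$L_1$-mass~$z$, and tries to show that the explicit function~$\Bell = \Phi(y) + C_1\Min(y,z) + C_2 z^p$ for~$p \leq 2$ (with the modification~\eqref{SS2} for~$p \geq 2$) is a supersolution for~$C_1$, and then~$C_2$, large; the conclusion then follows by the supermartingale argument of Lemma~\ref{SupersolutionLemma} along the dyadic filtration, together with the boundary inequality~$\Bell(x,y,0) \geq \Phi(y)$.

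\emph{Role of the cancellation conditions.} The structural core is the continuous analog of Lemmas~\ref{MinLemma} and~\ref{PhiLemma}: the \emph{defect} incurred when passing from~$\Phi$ (and from~$\Min$, or the summands of~\eqref{SS2}) to the weighted average over the children of a cube must be dominated by a genuinely positive good term -- the continuous counterpart of~$\Psi$ from~\eqref{Psi} -- measuring how far the local~$L_1$-mass is from being concentrated in a single child. The hypothesis~\eqref{WCF} should force the leading part of the error terms to vanish precisely when the mass concentrates, the continuous shadow of the implication ``$\vec x = \lambda D_j \Rightarrow (\Lin[\vec x])_j = 0$'' used at the end of the proof of Lemma~\ref{MinLemma}; and~\eqref{PhiLin} should kill the corresponding leading term in the~$\Phi$-summand, exactly as~\eqref{WCT} is used at the end of the proof of Lemma~\ref{PhiLemma}.

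\emph{The main obstacle.} It is the non-locality of~$\BB_\alpha$. In the martingale model the value of~$\BB_\alpha[F]$ on an atom of~$\F_n$ depends only on the martingale differences up to level~$n$, so the induction on scales is a genuine supermartingale statement; the Fourier multiplier~$\BB_\alpha$ instead spreads each~$\BB_\alpha[f_Q]$ across all scales, so the partial sums~$\sum_{\ell(Q') \geq \ell(Q)}\BB_\alpha[f_{Q'}]$ are not adapted in any clean way and the ``main inequality'' picks up off-diagonal errors coupling distant cubes. Proving that these tails are summable and can be absorbed -- presumably into the good term and into the reservoir~$C_2 z^p$, using almost-orthogonality of the Littlewood--Paley pieces and an~$\eps$-trick in the spirit of Lemma~\ref{EpsilonLemma} -- is the technical heart of the matter, and the reason the statement is offered only as a conjecture. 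An alternative would be to realize~$\BB_\alpha[f]$ as a boundary trace via the Caffarelli--Silvestre (or Poisson) extension, rewrite~$\int_{\R^d}\Phi(\BB_\alpha[f])$ as a bulk integral over~$\R^{d+1}_+$ by a Green-type identity, and replace the induction on scales by a monotonicity argument in the extension variable; the non-locality then resurfaces as the need to control the boundary (normal-derivative) terms.
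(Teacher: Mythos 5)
The statement you have been asked to prove is precisely the open Conjecture of Section~\ref{s6}: the paper offers no proof of it, so there is no argument of the author's to compare yours against. Judged on its own terms, your proposal establishes (modulo routine details) only the ``only if'' direction and explicitly does not establish the ``if'' direction, so as a proof of the stated equivalence it has a genuine gap, namely the entire sufficiency half. Your necessity sketch is the correct continuous analogue of Corollary~\ref{NecessityOfPhiCancellation} and does go through: the polar-coordinate identity rests on $p(d-\alpha)=d$, and the $O(1)$ error bounds on the annulus $\{2\eps\le|x|\le1\}$ follow from the local Lipschitz estimate $|\Phi(u+v)-\Phi(u)|\lesssim(|u|+|v|)^{p-1}|v|$ together with $(\alpha-d)(p-1)=-\alpha$, which makes the error integrand $\lesssim \eps|x|^{-d-1}+|x|^{-\alpha}$; the far-field decay $O(|x|^{\alpha-d-\gamma})$ needs $\tilde\Lin$ at least H\"older on the sphere, which is consistent with the paper's standing smoothness assumption on $\Lin$. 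Note also that your construction nowhere uses~\eqref{WCF}, so necessity of~\eqref{PhiLin} holds without the weak cancellation hypothesis --- worth stating, since the paper itself is unsure whether~\eqref{WCF} is needed in the continuous setting.

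For sufficiency you describe a program, not a proof, and you correctly name the obstruction: in the martingale model $(\BB_\alpha[F])_n$ on an atom of $\F_n$ is determined by the differences up to level $n$, so the main inequality~\eqref{MainInequality} really is a local, one-step statement and Lemma~\ref{SupersolutionLemma} closes the induction; for the Fourier multiplier each piece $\BB_\alpha[f_Q]$ leaks across all scales and locations, so the quantity you would feed into the second slot of the Bellman function is not adapted and the one-step inequality acquires off-diagonal errors that the good term $\Psi$ has no obvious way to absorb. Until those tails are controlled, the supersolutions~\eqref{SS1} and~\eqref{SS2} prove nothing in the continuum. This is exactly why the statement is posed as a conjecture; your proposal should be presented as a partial result (necessity) plus a strategy, not as a proof.
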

The original Maz'ya's conjecture may be obtained by choosing~$\alpha = 1$,~$\ell = d$, and~$\Lin(\zeta) = \zeta$. By spherical symmetry,~$\tilde{\Lin}(x) = cx$ for some constant~$c$ and~\eqref{PhiLin} reduces to~\eqref{MazjaCancellation} in this case. It is not clear whether~\eqref{WCF} is necessary in this context or not.

We end the paper with an open question that is interesting from the Bellman function point of view.
\begin{Que}
What is the exact expression for the Bellman function described in Example~\textup{\ref{ExactFunction}?} In other words, can one compute the function~$\Bell \colon\Omega \to \R$ that is minimal among all function satisfying the boundary inequality~\eqref{BoundIneq} and the main inequality~\eqref{MI}\textup{?} Maybe\textup{,} there is another way to find the sharp constant in the inequality~\eqref{ExampleIneq}\textup{?}
\end{Que}

\bibliography{mybib}{}
\bibliographystyle{amsplain}

St. Petersburg State University, Department of Mathematics and Computer Science;

St. Petersburg Department of Steklov Mathematical Institute;

d.m.stolyarov at spbu dot ru.
\end{document}